
\documentclass
{amsproc}

\usepackage{amsmath}
\usepackage{amsthm}
\usepackage{amssymb}
\usepackage{graphicx}
\usepackage{multirow}
\usepackage{tikz-cd}
 
\DeclareGraphicsExtensions{.png,.pdf,.eps}
\usepackage[all,2cell]{xy}
\usepackage{tikz}
\usepackage{pgf}
\usepackage{enumerate}
\usepackage{mathdots}
\usetikzlibrary{cd}
\usepackage{caption}
\captionsetup[table]{skip=20pt}	
\usepackage{adjustbox}
\usetikzlibrary{matrix,arrows,backgrounds}
\usepackage{circuitikz}

\usepackage{listings}
\lstset{language=Python}
\usepackage{pythonhighlight}

\usepackage{enumitem}

\usepackage{array}
\newcolumntype{H}{>{\setbox0=\hbox\bgroup}c<{\egroup}@{}}
\usepackage{hyperref}

\newcommand\xx{11} 
\newcommand\yy{18} 

\newtheorem{theorem}{Theorem}[section]
\newtheorem{lemma}[theorem]{Lemma}

\newtheorem{proposition}[theorem]{Proposition}


\newtheorem{construction}{Construction}

\theoremstyle{definition}

\newtheorem{definition}[theorem]{Definition}

\newtheorem{setup}[theorem]{Setup}

\newtheorem{remark}[theorem]{Remark}

\setcounter{tocdepth}{1}
\setcounter{secnumdepth}{4}

\title[Constructing geometric realizations of birational maps 
]{Constructing geometric realizations of birational maps between Mori Dream Spaces}

\author[Barban]{Lorenzo Barban}
\address{Center for Complex Geometry, Institute for Basic Science (IBS), 55 Expo-ro, Yuseong-gu, Daejeon, 34126, Republic of Korea}
\email{lorenzobarban@ibs.re.kr}

\author[Occhetta]{Gianluca Occhetta}
\address{Dipartimento di Matematica, Universit\`a degli Studi di Trento, via
Sommarive 14 I-38123, Trento (TN), Italy}
\email{gianluca.occhetta@unitn.it}

\author[Sol\'a Conde]{Luis E. Sol\'a Conde}
\address{Dipartimento di Matematica, Universit\`a degli Studi di Trento, via Sommarive 14 I-38123, Trento (TN), Italy}
\email{eduardo.solaconde@unitn.it}

\subjclass[2010]{Primary 14E30; Secondary 14L30, 14L24, 14M25}



\usepackage{enumitem}



\newcommand\ignore[1]{}






\DeclareMathOperator{\HH}{H}

\def\C{{\mathbb C}}

\def\P{{\mathbb P}}
\def\Q{{\mathbb Q}}
\def\R{{\mathbb R}}
\def\Z{{\mathbb Z}}

\DeclareMathAlphabet{\mathdutchcal}{U}{dutchcal}{m}{n}
\SetMathAlphabet{\mathdutchcal}{bold}{U}{dutchcal}{b}{n}
\DeclareMathAlphabet{\mathdutchbcal}{U}{dutchcal}{b}{n}

\def\cA{{\mathcal A}}

\def\cC{{\mathcal C}}

\def\cO{{\mathcal{O}}}

\def\cdR{{\mathdutchcal R}}

\def\cdW{{\mathdutchcal W}}

\def\cY{{\mathcal Y}}
\def\cdX{{\mathdutchcal X}}
\def\cdY{{\mathdutchcal Y}}
\def\Q{{\mathbb{Q}}}

\def\operatorname#1{\mathop{\rm #1}\nolimits}

\def\Proj{\operatorname{Proj}}

\def\Hom{\operatorname{Hom}}

\def\Pic{\operatorname{Pic}}
\def\Hom{\operatorname{Hom}}

\def\Spec{\operatorname{Spec}}

\def\Nef{{\operatorname{Nef}}}

\def\NU{{\operatorname{N^1}}}

\def\Eff{{\operatorname{Eff}}}
\def\Mov{{\operatorname{Mov}}}

\def\Amp{\operatorname{Amp}}


\def\GX{\mathcal{G}\!X}

\newcommand{\pb}{\ar@{}[dr]|{\text{\pigpenfont J}}}
\def\ol{\overline}

\makeatletter
\makeatother
\newcommand{\xdasharrow}[2][->]{
\tikz[baseline=-\the\dimexpr\fontdimen22\textfont2\relax]{
\node[anchor=south,font=\scriptsize, inner ysep=1.5pt,outer xsep=2.2pt](x){#2};
\draw[shorten <=3.4pt,shorten >=3.4pt,dashed,#1](x.south west)--(x.south east);
}}
\newcommand{\hooklongrightarrow}{\lhook\joinrel\longrightarrow}





\def\Mo{\operatorname{\hspace{0cm}M}}

\def\MM{\mathrm{M}}
\def\NN{\mathrm{N}}

\newcommand{\git}{\mathbin{/\mkern-6mu/}}




\begin{document}
\begin{abstract}
We construct geometric realizations --projective algebraic versions of cobordisms-- for birational maps between Mori Dream Spaces. We show that these geometric realizations are Mori Dream Spaces, as well, and that they can be constructed so that they induce factorizations of the original birational maps as compositions of wall-crossings. 
In the case of toric birational maps between normal $\Q$-factorial, projective toric varieties, we provide several {\tt SageMath} functions to work with $\C^*$-actions and birational geometry; in particular we show how to explicitly construct a moment polytope of a toric geometric realization. Moreover, by embedding Mori Dream Spaces in toric varieties, we obtain geometric realizations of birational maps of Mori Dream Spaces as restrictions of toric geometric realizations. We also provide examples and discuss when a geometric realization is Fano. 
\end{abstract}
\maketitle

\section{Introduction}

Bia{\l}ynicki-Birula theory for $\C^*$-actions serves as an algebraic analog of Morse theory (see \cite{Carrell,BB}), that has deep connections with birational geometry, as shown in the works of Reid, Morelli, W{\l}odarczyk, and others (cf. \cite{ReidFlip, Morelli, Wlodarczyk}). Within this circle of ideas, it has been recently shown that $\C^*$-actions on projective varieties can be encoded through birational transformations on geometric quotients, defined by the properties of the action on its fixed point components (see \cite{WORS4,BRUS} and the references therein).

Consider a birational map $\phi:Y_-\dashrightarrow Y_+$ between  normal projective varieties. A geometric realization of $\phi$ is a normal projective variety $X$, endowed with a faithful $\C^*$-action, where $Y_\pm$ correspond to geometric quotients of $X$ with respect to different linearizations of the action on an ample line bundle $L$, such that, for general $y_\pm \in Y_\pm$, $y_+ = \phi(y_-)$ if and only if they represent the same orbit in $X$. Typically, we assume that $Y_\pm$ and $X$ have reasonable singularities, consistent with the framework of modern birational geometry.

The geometric quotients $\GX_i$ of $X$ associated with intermediate weights of the $\C^*$-action on $L$ form a factorization of $\phi$:
\[
\xymatrix{Y_-\ar@{=}[r]\ar@<1ex>@/^1.3pc/@{-->}[rrrrrr]^{\phi}&\GX_1\ar@{-->}_{\phi_1}[r]&\GX_2\ar@{-->}_{\phi_2}[r]&\ldots \ar@{-->}_{\phi_{s-3}}[r]&\GX_{s-2}\ar@{-->}_{\phi_{s-1}}[r]&\GX_s\ar@{=}[r]&Y_+.}
\]

Explicit examples of geometric realizations have been constructed for classical birational maps, including quadro-quadric Cremona transformations, inversions of (projectivizations of) some Jordan algebras, and Atiyah flips, among others (see \cite{WORS1,FSC,BF}).
A particularly enlightening example is the following; consider the cubo--cubic Cremona transformation of $\P^3$:
\[
\phi: \P^3\dashrightarrow \P^3,\qquad (x_0:x_1:x_2:x_3)\mapsto (x_0^{-1}:x_1^{-1}:x_2^{-1}:x_3^{-1})
\]
A geometric realization of $\phi$ is given by the blow up of $X=\P^1\times\P^1\times\P^1\times\P^1$ along two extremal fixed points, endowed with the diagonal $\C^*$-action. Linearizing the action on the ample line bundle $\cO_X(1,1,1,1)$, we get four geometric quotients, and a factorization of $\phi$ in three birational maps
\[
\xymatrix{\P^3\ar@{=}[r]\ar@<1ex>@/^1pc/@{-->}[rrrrr]^{\phi}&\GX_1\ar@{-->}_{\phi_1}[r]&\GX_2\ar@{-->}_{\phi_2}[r]&\GX_{3}\ar@{-->}_{\phi_{3}}[r]&\GX_4\ar@{=}[r]&\P^3.}
\]
that can be described as follows:
\begin{itemize}[leftmargin =\xx pt]
	\item $\phi_1$ is a divisorial extraction (inverse of the blowup of $\P^3$ along four points);
	\item $\phi_2$ is the composition of the flips of $6$ lines;
	\item $\phi_3$ is a divisorial contraction (blowup of  $\P^3$ along four points).
\end{itemize}
\begin{figure}[htp]
	\centering
	\hfill
	\includegraphics[scale=0.06]{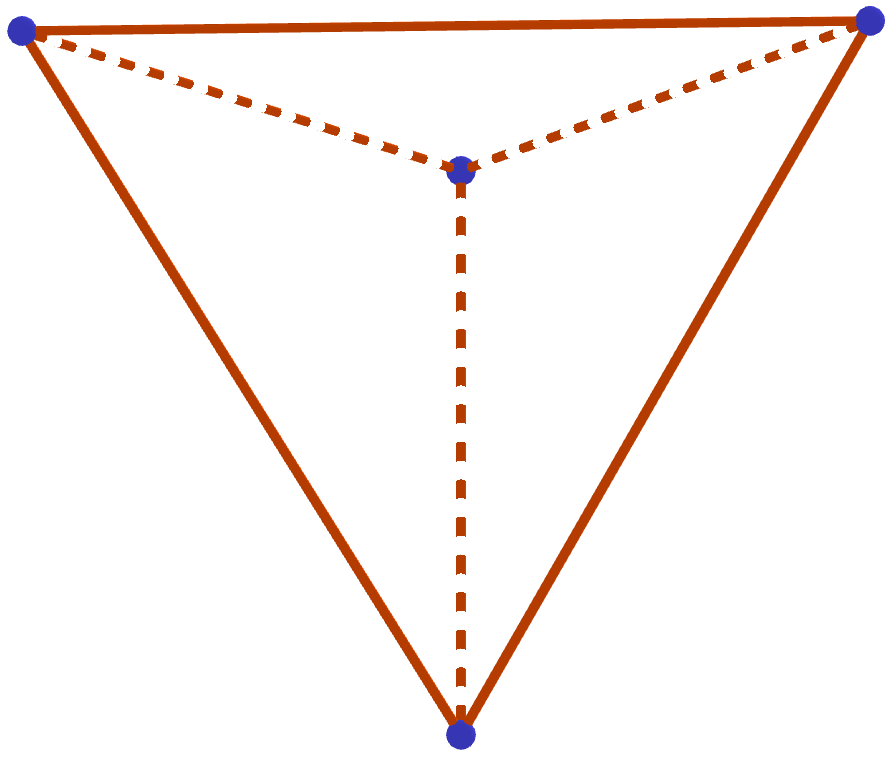}\hfill 
	\includegraphics[scale=0.06]{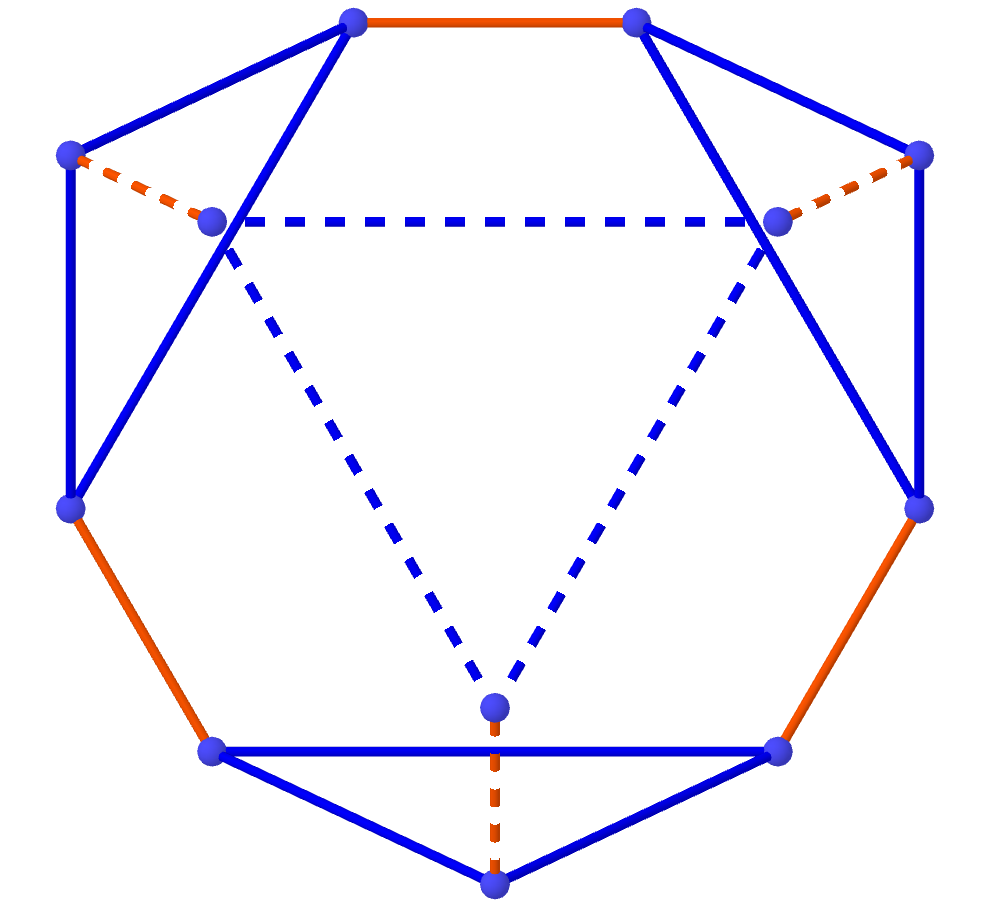}\hfill 
	\includegraphics[scale=0.06]{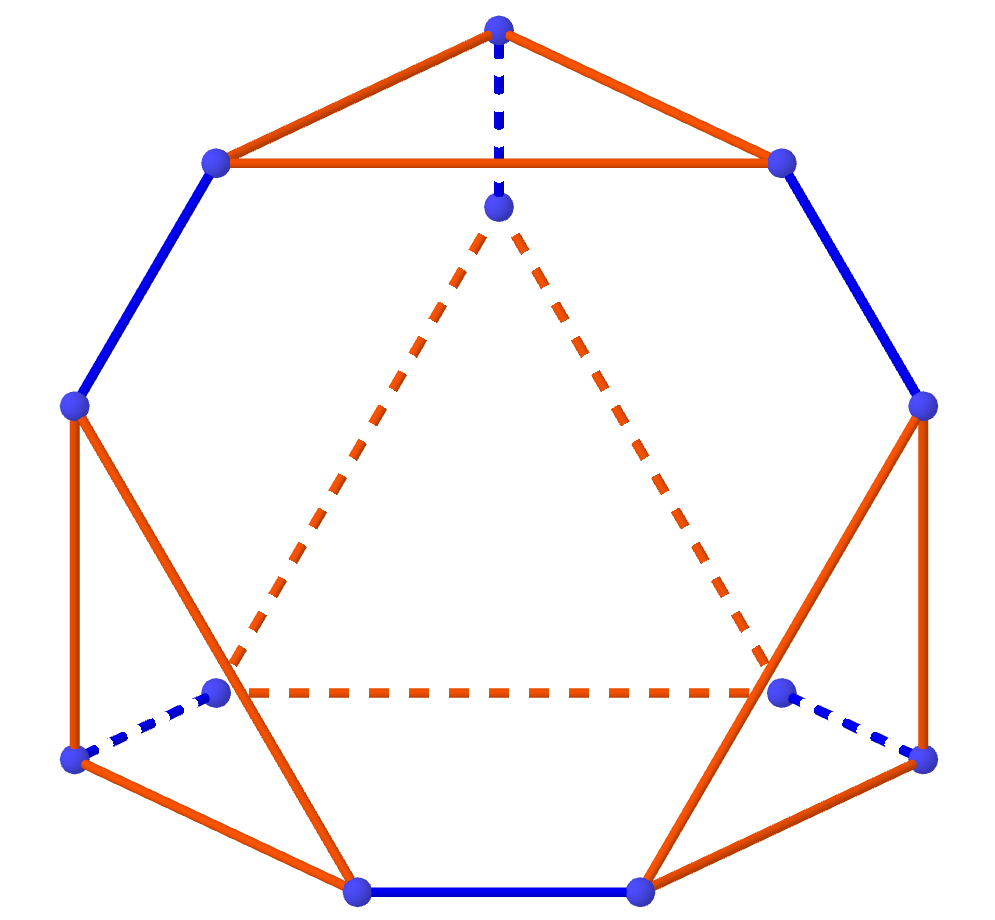}\hfill 
	\includegraphics[scale=0.06]{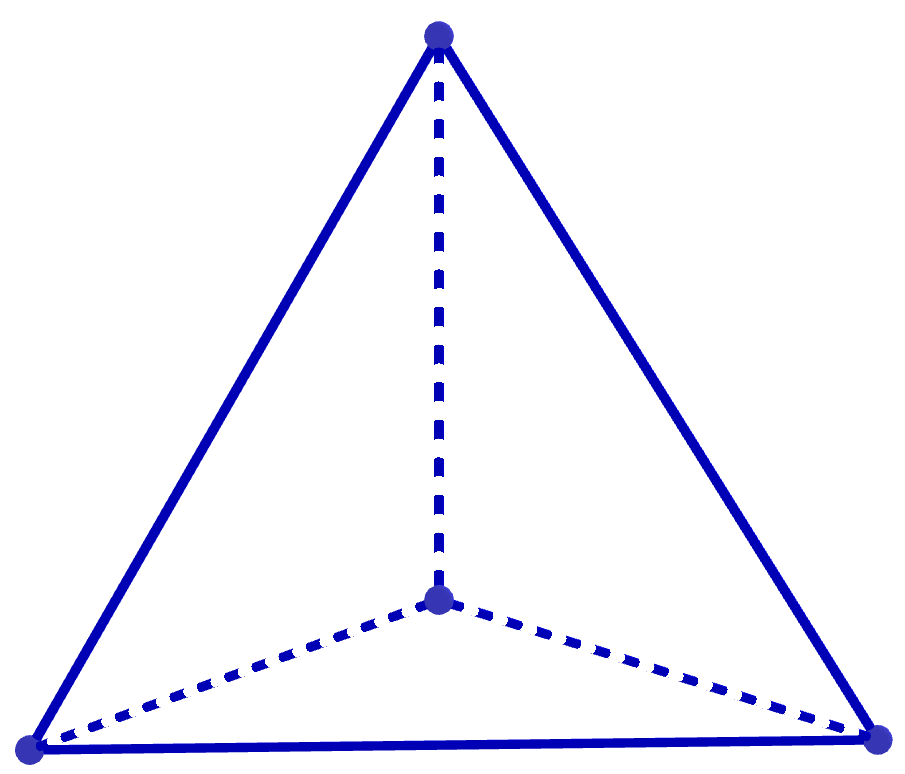}\hfill 
	\hfill
	\caption{\label{fig:cremonafact}Moment polytopes of the varieties $\P^3$, $\GX_2,\GX_3,\P^3$.}
\end{figure}
Note that the maps $\phi_i$ in this example are not elementary; however,
by consider a linearization on an appropriately chosen ample line bundle $L=\cO_X(a,b,c,d)$, it is always possible to obtain a factorization into elementary divisorial extractions, contractions and flips (see \cite[Section~6]{BRUS} for details).

We call a geometric realization {\em sharp} if the maps $\phi_i$ in the factorization are elementary flips, divisorial contractions, or divisorial extractions.  This concept links the Sarkisov program for $\phi$ to $\C^*$-equivariant minimal model programs (MMPs) for its geometric realization $X$.

In this paper we investigate the existence of geometric realizations, and which geometrical property of $Y_{\pm}$ are inherited by the geometric realization. Our main statement is the following:

\begin{theorem}\label{thm:main} Let $\phi:Y_-\dashrightarrow Y_+$ be the natural birational map between two $\Q$-factorial birational contractions of a Mori Dream Space $Y$. Then there exists a sharp geometric realization of $\phi$ that is a Mori Dream Space. \end{theorem}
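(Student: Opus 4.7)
The strategy combines the Cox ring description of $Y$ with a variation of GIT argument. By the Hu--Keel theorem, since $Y$ is a Mori Dream Space, we may present it as a GIT quotient
\[
Y = V \git_{\chi_Y} T,
\]
where $V = \Spec \Cox(Y)$ is the affine spectrum of the Cox ring, $T = \Hom(\Cl(Y), \C^*)$ is the characteristic torus, and $\chi_Y \in \Cl(Y)_\Q$ lies in the Mori chamber of $Y$. The $\Q$-factorial birational contractions $Y_\pm$ correspond to characters $\chi_\pm$ in the interior of adjacent (or distant) chambers of the Mori decomposition of the movable cone $\Mov(Y) \subset \Cl(Y)_\Q$.

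The plan is then to connect $\chi_-$ and $\chi_+$ by a line segment $\ell \subset \Cl(Y)_\Q$ in sufficiently general position, so that $\ell$ crosses the walls of the chamber decomposition transversely and one at a time. Let $T_1 \subset T$ be the connected codimension-one subtorus annihilated by $\chi_+ - \chi_-$, so that $T/T_1 \cong \C^*$ and each character on $\ell$ restricts to a common character $\chi_0$ of $T_1$. I would then set
\[
X := V \git_{\chi_0} T_1,
\]
a normal projective variety carrying a faithful residual $T/T_1 \cong \C^*$-action, and propose this as the geometric realization.

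Three claims will need verification. First, $X$ is $\Q$-factorial and a Mori Dream Space: being a projective GIT quotient of an affine variety by a torus, Hu--Keel applies once we ensure that $\chi_0$ lies in the interior of a $T_1$-chamber, which is guaranteed by the generic position of $\ell$. Second, reduction in stages for GIT quotients produces
\[
Y_\pm \;=\; V \git_{\chi_\pm} T \;\cong\; X \git_{\overline{\chi_\pm}} \C^*,
\]
where $\overline{\chi_\pm}$ are the residual $\C^*$-linearizations on the ample line bundle on $X$ descending from the $T$-linearization on $V$; these are the extremal (sink/source) linearizations, and the induced birational map on general $\C^*$-orbits recovers $\phi$. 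Third, since $\ell$ crosses walls one at a time, the intermediate quotients $\GX_i$ in the $\C^*$-factorization differ by single wall-crossings, hence by elementary flips, divisorial contractions, or divisorial extractions, and the realization is sharp.

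The principal obstacle is the second claim, together with the compatibility needed for the third. One must carefully track the semistable loci through the reduction in stages, and verify that the chamber decomposition on $\Hom(T/T_1, \C^*)_\Q \cong \Q$ induced by the $\C^*$-action on $X$ refines precisely the portion of the Mori decomposition of $Y$ traversed by $\ell$, so that no chamber is collapsed or artificially subdivided. Both this compatibility and the existence of a sufficiently generic $\ell$ rely on the finiteness of the Mori chamber decomposition of $Y$, which follows from the MDS hypothesis, and on an analysis of the Hilbert--Mumford criterion for $T$ and $T_1$ along $\ell$.
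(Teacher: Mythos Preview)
There is a concrete gap in your second claim. Your $X=V\git_{\chi_0}T_1$ inherits a $\C^*$-action whose GIT quotients, by reduction in stages, are the varieties $V\git_{\chi}T$ for $\chi$ ranging over the \emph{entire affine line} $\chi_-+\Q(\chi_+-\chi_-)$ in $\Cl(Y)_\Q$, not just the segment $[\chi_-,\chi_+]$. Since the classes $\chi_\pm$ are big (the $Y_\pm$ being birational contractions of $Y$), this line meets $\ol{\Eff(Y)}$ in a segment strictly containing $[\chi_-,\chi_+]$, and the sink and source of the $\C^*$-action on $X$ correspond to its endpoints on $\partial\,\ol{\Eff(Y)}$, not to $\chi_\pm$. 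Thus $X$ is not a B-type geometric realization of $\phi$: its extremal fixed loci are the (typically lower-dimensional) models sitting at the boundary of the effective cone. A related issue: if $\chi_+-\chi_-$ or its negative happens to be effective, then $\Cox(Y)^{T_1}\neq\C$ and your $X$ is only projective over a positive-dimensional affine base, so it is not projective at all.

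The repair is to prune: replace $X$ by $\Proj$ of the truncated subalgebra $\bigoplus_{n\ge 0}\bigoplus_{j=0}^{n}\HH^0\big(Y,n\chi_-+j(\chi_+-\chi_-)\big)$, which forces the sink and source to be exactly $Y_\pm$. This is essentially the paper's construction, packaged differently: it forms the $\P^1$-bundle $W=\P_Y(\cO_Y\oplus\cO_Y(H))$ with $H$ a Cartier divisor in the direction $\chi_+-\chi_-$, notes that $W$ is an MDS because $Y$ is, and realizes the pruned $X$ as the birational contraction $\Proj R(W;m(\ell L+\pi^*A))$ of $W$. The substantive remaining step---which your outline does not address and which does not follow from your Hu--Keel argument for the unpruned quotient---is that this contraction is $\Q$-factorial (hence an MDS). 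The paper proves this by reducing, via a local analysis at each inner fixed component, to the criticality-two case of a single wall-crossing; that reduction is precisely where the sharpness hypothesis is used.
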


The proof relies on the fact that a geometric realization can be defined as a projectivization of a certain subring of the Cox ring of $Y$; in particular, we may obtain it as a particular type of birational contraction --called {\em unpruning}-- of a $\P^1$-bundle over a contraction of $Y$. 
This idea allows us to present the construction of geometric realizations in an explicit way.

A substantial part of the paper has been devoted to the case of geometric realizations of {\em toric} birational transformations; in this case, our construction can be rephrased in the language of combinatorics and convex geometry. 
We present a number of software tools --programmed in the {\tt SageMath} system-- to construct and analyze geometric realizations.  These tools enable further applications, such as explicitly constructing toric geometric realizations for maps between torically embedded Mori Dream Spaces. All the scripts presented in this work, and some other useful functions constructed for dealing with toric varieties, have been made available at \cite{BOSSage}.

\subsection*{Outline} Section \ref{section:preliminaris1} is devoted to background material on $\C^*$-actions, quotients and toric varieties. In Section \ref{section:geometricreal} we deal with geometric realizations and with the proof of the Theorem \ref{thm:main}. The toric case and the scripts we have developed are presented in Section \ref{section:toric}; this section contains an example --a toric Fano fourfold-- that illustrates how our tools work. We finish the paper with Section \ref{sec:MDSs}, where we describe how to extend our explicit construction to Mori Dream Spaces, and provide an example.

\subsection*{Acknowledgements}  The first author was supported by the Institute for Basic Science (IBS-R032-D1). The second author would like to thank Cinzia Casagrande for suggesting to analyze some examples included in the work of Batyrev. Second and third author partially supported by INdAM--GNSAGA.


\section{Preliminaries}\label{section:preliminaris1}

\subsection{Notation}\label{ssec:notn}

We work over the field of complex numbers. By a \emph{polarized pair} we mean a  pair $(X,L)$, where $X$ is a normal projective variety and $L$ is an ample line bundle on $X$. 
A birational map $\phi:X\dashrightarrow Y$ is a \emph{small modification} if it is an isomorphism over an open subset whose complement has codimension at least $2$.  If $X$ and $Y$ are $\Q$-factorial, such a map is called a \emph{small $\Q$-factorial modification} (SQM, for short). 
By a {\it flip} we mean a $D$-flip as in \cite{Thaddeus1996}. The symbol $Y_{\pm}$ represents two varieties, $Y_-$ and $Y_+$, sharing a common property. 
Given  a normal projective variety $X$, and an effective $\Q$-Cartier divisor $D$  on $X$, we denote the {\it section ring of $D$} as $R(X;D)=\bigoplus_{m\geq 0} \HH^0(X,mD)$. A normal, $\Q$-factorial projective variety $X$ is called a {\it Mori Dream Space} (MDS for short) if 
\begin{enumerate}[leftmargin=\yy pt]
	\item\label{item:1} the Picard group $\Pic(X)$ of $X$ is finitely generated;
	\item\label{item:2} the nef cone $\Nef(X)$ is generated by classes of finitely many semiample divisors;
	\item\label{item:3} there exists a finite number of SQMs $f:X\dashrightarrow X_i$, for $i=0,\ldots,k$, such that every $X_i$ satisfies $(\ref{item:2})$ and $$\Mov(X)=\bigcup_{i=0}^k f^*\Nef(X_i).$$
\end{enumerate}
Let $X$ be a MDS and let $D_1,D_2$ be two effective $\Q$-Cartier divisors. The divisors $D_1$ and $D_2$ are {\it Mori equivalent} if $\Proj R(X;D_1)\simeq \Proj R(X;D_2)$. A {\it Mori chamber} in $\NU(X)$ is the closure of a Mori equivalence class whose interior is open in $\NU(X)$.

\subsection{$\C^*$-action on projective varieties}\label{ssection:C*Actions}

We will denote by $\C^*$ the complex $1$-dimensional algebraic torus. 
We recall the basic background on $\C^*$-actions, referring to \cite{WORS1,PhDBarban} and the references therein for more details.
Consider a $\C^*$-action on a normal projective variety $X$. The fixed point locus $X^{\C^*}$ can be decomposed into fixed connected components, i.e. $X^{\C^*}=\bigsqcup_{Y\in \cY} Y$, where  $\cY$ denotes the set of connected fixed components.

For any $x\in X$, the orbit map can be extended to a morphism $\P^1\times X\to X$, that is, there exist $\lim_{t\to 0}tx,\lim_{t\to \infty} tx$. Moreover, for any $Y\in \cY$ we define the {\it Bia\l ynicki-Birula cells} as
$$ X^+(Y)=\{x\in X\mid \lim_{t\to 0} tx\in Y\}, \quad X^-(Y)=\{x\in X\mid \lim_{t\to \infty} tx\in Y\}.$$
As a corollary of the famous result of Bia\l ynicki-Birula (see \cite{BB}), there exists a unique fixed connected component $Y_-$ (resp. $Y_+$) such that $X^-(Y_-)$ (resp. $X^+(Y_+)$) is dense. We respectively call $Y_-$ and $Y_+$ the \emph{sink} and \emph{source} of the $\C^*$-action on $X$. We refer to $Y_{\pm}$ as the \emph{extremal fixed point components} of the action; the remaining connected fixed components will be called \emph{inner}, and their set will be denoted by $\cY^\circ$.

Let $L$ be an ample line bundle on $X$: by \cite[Proposition 2.4 and subsequent Remark]{KKLV}, $L$ is $\C^*$-linearizable, hence we can consider the $\C^*$-action on the polarized pair $(X,L)$. For any $Y\in \cY$, $\C^*$ acts on the fibers of $L$ over $Y$ with  a fixed weight $\mu_L(Y)\in \Mo(\C^*)\simeq \Z$. The induced map $\mu_L:\cY\to \Z$ is called the \emph{weight map}, and $\mu_L(Y)$ is called the \emph{critical value} of $L$ on $Y$. Let $a_0,\ldots,a_r$ be the different critical values of a linearization of $L$, and order them as $a_0<a_1<\ldots<a_r$. The integer $r$ and the difference $\delta:= a_r-a_0$ are respectively called the \emph{criticality} and the \emph{bandwidth} of the $\C^*$-action on $(X,L)$. One may show that, since $L$ is ample, we have that $a_0=\mu_L(Y_-)$, $a_r=\mu_L(Y_+)$. Notice that two linearizations of $L$ differ by a character, hence $r$ and $\delta$ are independent of the chosen linearization. 

A $\C^*$-action is \emph{equalized at} $Y\in \cY$ if for every point $x\in (X^-(Y)\cup X^+(Y))\setminus Y$ the isotropy group of the $\C^*$-action at $x$ is trivial. A $\C^*$-action is called {\em equalized} if it is equalized at every component $Y\in \cY$. 

\subsection{Geometric quotients and the associated birational map of a $\C^*$-action}\label{ssection:GitBirational}

We refer to \cite[\S 2.2]{WORS3} for details. Let $\C^*$ act on a polarized pair $(X,L)$. Set $\cA=\bigoplus_{m\geq 0} \HH^0(X,mL)$, and let $\HH^0(X,mL)_k$ be the $\C^*$-invariant subspace where $\C^*$ acts with weight $k$.

For any $\tau\in [a_0,a_r]\cap \Q$, consider the algebra $\cA(\tau):=\bigoplus_{m\geq 0, m\tau\in\Z} \HH^0(X,mL)_{m\tau}$. By \cite[Proposition 2.11]{WORS3}, the $\C$-algebra $\cA(\tau)$ is finitely generated, and $$\GX(\tau) := \Proj \cA(\tau)$$ is a projective GIT quotient of the action. If $\tau$ is not a critical value, then the quotient $\GX(\tau)$ is geometric, and parametrizes orbits of stable points of the open, $\C^*$-invariant subset 
$$X^s_i:=X\setminus \left(\bigsqcup_{\mu_L(Y)< \tau } X^+(Y)\sqcup \bigsqcup_{\mu_L(Y)\geq \tau} X^-(Y)\right), $$ where $i$ is such that $\tau\in (a_i,a_{i+1})$.
If instead $\tau=a_i$, for a certain $i\in \{0,\ldots,r\}$, then the quotient $\GX(\tau)$ is the semigeometric quotient of the set (of semistable points) 
$$X\setminus \left(\bigsqcup_{\mu_L(Y)\leq a_i} X^+(Y)\sqcup \bigsqcup_{\mu_L(Y)\geq a_{i+1}} X^-(Y)\right).$$

For any $\tau, \tau'\in (a_{i-1},a_{i})\cap \Q$, it holds that $\GX(\tau)\simeq \GX(\tau')$; in particular we are allowed to define:
$$\GX_i:=\GX(\tau),\quad\mbox{for any }\tau\in (a_{i-1},a_{i})\cap \Q.$$ 
With this in mind, the geometric quotients $\GX_-:=\GX_1
$ $\GX_+:=\GX_{r}
$ 
are called \emph{extremal}, and the remaining geometric quotients $\GX_i$, for $i=2,\ldots,r-1$, are called \emph{inner}.

\begin{proposition}\cite[Theorem 3.6]{BRUS}\label{proposition:Pruning}
	Let $\C^*$ act on a polarized pair $(X,L)$ with criticality $r$. Let $\tau_-<\tau_+$ be two rational numbers in $[a_0,a_r]$. The $\C$-algebra
	$$\cA(\tau_-,\tau_+) = \bigoplus_{m\geq 0} \bigoplus_{k=m\tau_-}^{m\tau_+} \HH^0(X,mL)_k$$
	is finitely generated, and $X(\tau_-,\tau_+)=\Proj \cA(\tau_-,\tau_+)$ is a normal projective $\C^*$-variety, equivariantly birational to $X$, with sink $\GX(\tau_-)$ and source $\GX(\tau_+)$. \end{proposition}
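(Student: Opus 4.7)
The plan is to realize $\cA(\tau_-,\tau_+)$ as a sub-bigraded-algebra of $\cA$ selected by a rational polyhedral cone in the bigrading lattice, and then to identify $\Proj\cA(\tau_-,\tau_+)$ with a normal projective $\C^*$-variety birational to $X$ whose extremal GIT quotients are $\GX(\tau_\pm)$.

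First I would view $\cA$ as $\Z^2$-graded, with bidegree-$(m,k)$ piece $\HH^0(X,mL)_k$. Since $X$ is projective and $L$ is ample, $\cA$ is a finitely generated $\C$-algebra. Passing if necessary to a Veronese in $m$ so that $N\tau_\pm\in\Z$, the subalgebra $\cA(\tau_-,\tau_+)$ consists of the bigraded pieces whose degrees lie in the rational polyhedral cone $C=\{m\ge 0,\ k-m\tau_-\ge 0,\ m\tau_+-k\ge 0\}\subset\Z^2$. Finite generation of $\cA(\tau_-,\tau_+)$ then follows from a standard combinatorial-invariant-theory argument: the sub-semigroup $C\cap\Z^2$ is finitely generated by Gordan's lemma, and combining this with the finite generation of $\cA$ by a set of bihomogeneous elements yields finite generation of $\cA(\tau_-,\tau_+)$.

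For normality, I would use that $\cA$ is an integrally closed domain (since $X$ is normal and $L$ is ample). The saturation of $C$ in $\Z^2$ implies that $\cA(\tau_-,\tau_+)$ is integrally closed in $\cA$, hence itself integrally closed, so $X(\tau_-,\tau_+)$ is normal and projective. The residual $\Z$-grading by the weight $k$ endows $X(\tau_-,\tau_+)$ with a $\C^*$-action whose critical values lie in $[\tau_-,\tau_+]$, with extremes $\tau_\pm$ by construction. Moreover, by the definition of $\GX(\cdot)$ recalled in Subsection~\ref{ssection:GitBirational}, the extremal GIT quotient of $X(\tau_-,\tau_+)$ at weight $\tau_\pm$ is $\Proj$ of the subring of pieces of weight exactly $m\tau_\pm$, which agrees with the corresponding subring of $\cA$; this identifies the sink and source as $\GX(\tau_-)$ and $\GX(\tau_+)$.

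The step I expect to be the main obstacle is equivariant birationality. The inclusion $\cA(\tau_-,\tau_+)\hookrightarrow\cA$ induces a $\C^*$-equivariant rational map $X\dashrightarrow X(\tau_-,\tau_+)$, and I would argue it is birational by showing that, for $m\gg 0$, the weight spaces $\HH^0(X,mL)_k$ with $k\in[m\tau_-,m\tau_+]$ already separate generic $\C^*$-orbits of $X$. The expected mechanism is that a general orbit crosses enough Bia\l{}ynicki-Birula cells so that intermediate weight spaces dominate the extremal ones after sufficient twisting; this is where the ampleness of $L$ and the geometry of the action on $X$ interact nontrivially, while the remaining steps are essentially formal once the bigraded framework is in place.
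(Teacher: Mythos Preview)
The paper does not give its own proof of this proposition: it is quoted verbatim as \cite[Theorem~3.6]{BRUS} and used as a black box, so there is nothing in the paper to compare your argument against.

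That said, your outline is essentially the standard one and is correct in its broad strokes. Viewing $\cA$ as bigraded and cutting out the pieces lying in the rational cone $C$ is exactly how one proves finite generation (Gordan's lemma plus finite generation of $\cA$), and your normality argument is fine once you note that the section ring of an ample line bundle on a normal projective variety is integrally closed, and that the degree-support semigroup $C\cap\Z^2$ is saturated. The identification of the sink and source with $\GX(\tau_\pm)$ is immediate from the definitions, as you say.

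You are right that equivariant birationality is the only point requiring real work. Your sketch (``intermediate weight spaces separate generic orbits'') is the correct intuition but is vague as stated. A cleaner way to finish is to note that for any rational $\tau\in(\tau_-,\tau_+)$ the graded algebra $\cA(\tau)$ sits inside $\cA(\tau_-,\tau_+)$ as its degree-$m\tau$ part, so $\GX(\tau)$ is a GIT quotient of $X(\tau_-,\tau_+)$ just as it is of $X$; since for non-critical $\tau$ this quotient is geometric and parametrizes the same set of orbits in both cases, the rational map $X\dashrightarrow X(\tau_-,\tau_+)$ is an isomorphism over the common open set of $\tau$-stable points, hence birational and $\C^*$-equivariant. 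This avoids the informal ``crossing enough cells'' language.
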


Again, for rational numbers $\tau_\pm$ that are not critical values of the action, the varieties $X(\tau_-,\tau_+)$ depend only on the particular intervals between critical points containing them, so that it makes sense to denote:
$$
X_{i,j}:=X(\tau_-,\tau_+) \quad\mbox{whenever}\quad \tau_-\in (a_{i-1},a_i), \quad\tau_+\in (a_{j-1},a_j).
$$

\begin{definition}\label{def:prununprun}
With the above notation, we will say that the varieties $X_{i,j}$ are called {\em prunings of} $X$. The $\C^*$-equivariant birational maps $X\dashrightarrow X_{i,j}$ are called {\em pruning maps}, and their inverses {\em unpruning maps}.
\end{definition}

\begin{definition}\label{definition:Btype}
	A $\C^*$-action on $(X,L)$ is of \emph{B-type} if there exist isomorphisms $\GX_-\simeq \GX(a_0)\simeq Y_-$, $\GX_+\simeq \GX(a_r)\simeq Y_+$.
\end{definition}

As already observed (cf. \cite[Remark 2.3.29]{PhDBarban}), one may always assume that a $\C^*$-action on $(X,L)$ is of B-type, up to performing a pruning of $X$ at $\tau_-\in (a_0,a_1), \tau_+\in (a_{r-1},a_r)$.

\begin{definition}\label{definition:Bordism}
	A $\C^*$-action on $(X,L)$ is called a \emph{bordism} if it is of B-type, and $\overline{X^{\pm}(Y)}$ do not contain a divisor, for every $Y\in\cY^\circ$.
\end{definition}

\begin{proposition}\label{prop:assocbirmap}
	Let $\C^*$ act on a polarized pair $(X,L)$ with criticality $r$. Since $\bigcap_{i=0}^{r-1} X^s_i\neq \emptyset$, there exists a birational map
	$$\psi\colon\GX_-\dashrightarrow \GX_+,$$
	called the {\em associated birational map}, which factors through the inner geometric quotients
	\begin{equation}\label{eq:factor}
	\xymatrix@C=25pt{\GX_-\ar@{-->}^{\psi_1}[r]&\GX_2\ar@{-->}^(0.45){\psi_2}[r]&\ \ldots\ \ar@{-->}^(0.4){\psi_{r-3}}[r]&\GX_{r-1}\ar@{-->}^(0.4){\psi_{r-1}}[r]&\GX_+.}
	\end{equation}
\end{proposition}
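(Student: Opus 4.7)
The plan is to exhibit a single open, $\C^*$-invariant subscheme $U\subseteq X$ that embeds as a common open subvariety into every geometric quotient $\GX_i$, $i=1,\dots,r$; the map $\psi$ and its factorization~\eqref{eq:factor} then arise as the natural gluing of these open embeddings.

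First I would take $U:=\bigcap_{i=0}^{r-1}X^s_i$, which is the open, $\C^*$-invariant locus assumed to be non-empty. Unravelling the description of $X^s_i$ recalled in Section~\ref{ssection:GitBirational}, as $\tau$ runs through the intervals $(a_i,a_{i+1})$, the union of the loci $X^+(Y)$ with $\mu_L(Y)<\tau$ exhausts $\bigsqcup_{Y\neq Y_+}X^+(Y)$, while the union of $X^-(Y)$ with $\mu_L(Y)\geq\tau$ exhausts $\bigsqcup_{Y\neq Y_-}X^-(Y)$. Hence $U$ coincides with the intersection of the two big Bia\l ynicki--Birula cells, $U=X^+(Y_+)\cap X^-(Y_-)$, and in fact it is dense in $X$: every point of $U$ lies on a free $\C^*$-orbit whose limit at $t\to 0$ is in the source and whose limit at $t\to\infty$ is in the sink, avoiding all inner fixed components.

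Next, for each $i\in\{1,\dots,r\}$, the subset $U$ sits inside the semistable locus defining $\GX_i$ and is saturated with respect to the quotient map $X^s_{i-1}\to\GX_i$: orbits in $U$ are stable for every intermediate linearization, so they are closed inside each $X^s_{i-1}$ and no further identifications take place. Therefore, the restriction of the quotient map to $U$ realizes a geometric quotient $U/\C^*$ which embeds as an open dense subscheme $U_i\subseteq\GX_i$. The canonical isomorphisms $U_i\simeq U/\C^*\simeq U_{i+1}$ define birational maps $\psi_i\colon\GX_i\dashrightarrow\GX_{i+1}$, all of them isomorphisms over the common open $U/\C^*$; the composition $\psi:=\psi_{r-1}\circ\cdots\circ\psi_1\colon\GX_-\dashrightarrow\GX_+$ produces the required factorization.

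The only non-formal step is verifying that $U$ is saturated inside each $X^s_{i-1}$, which boils down to the fact that stability is detected by orbit closedness in the relevant semistable locus; once this is in place, the rest is a direct consequence of the variation-of-GIT picture recalled in Section~\ref{ssection:GitBirational} and of Proposition~\ref{proposition:Pruning}, which guarantees a common birational model for the quotients.
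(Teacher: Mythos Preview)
The paper does not supply a separate proof of this proposition; the entire justification is the clause ``Since $\bigcap_{i=0}^{r-1}X^s_i\neq\emptyset$'' built into the statement, with the remaining details regarded as routine variation-of-GIT from the material recalled in Section~\ref{ssection:GitBirational}. Your argument is correct and is exactly the spelling-out of that implicit reasoning: the common dense $\C^*$-invariant open $U=X^+(Y_+)\cap X^-(Y_-)$ descends to an open subscheme of each geometric quotient $\GX_i$, and the resulting identifications give the maps $\psi_i$. Your one flagged concern, saturation of $U$ in $X^s_{i-1}$, is in fact automatic here: each $X^s_{i-1}\to\GX_i$ is already a \emph{geometric} quotient, so its fibers are single orbits and every $\C^*$-invariant open subset is saturated.
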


If the action is a bordism, then the above maps are isomorphisms in codimension one.  If we assume in addition $X$ to be $\Q$-factorial and the $\C^*$-action to be a bordism equalized at the sink and the source, then $Y_{\pm}$ are $\Q$-factorial (cf. \cite[Lemma 5.3]{BRUS}), and the map $\psi$ is an SQM. 

In particular, a B-type $\C^*$-action on $(X,L)$ encodes information about a birational map $\psi:Y_-\dashrightarrow Y_+$ together with a factorization of it. In Section \ref{ssection:GeometricRealization} we will explain how, under some assumptions, we may reconstruct the $\C^*$-action upon the birational map.

\subsection{Toric varieties}\label{ssection:Toric}

We start by setting the notation we will use for toric varieties, and refer to \cite{CLS} for details. We have made use of the software system {\tt SageMath}, that includes a number of interesting tools to handle toric varieties; we refer the interesting reader to \cite{sagetoric} for the corresponding documentation.

Given an algebraic torus $T$, we denote by $\MM(T)$ (resp. $\NN(T)$) the lattice of characters (resp. $1$-parameter subgroups) of $T$, and set $\MM(T)_\R:=\MM(T)\otimes_\Z\R$ (resp. $\NN(T)_\R:=\NN(T)\otimes_\Z\R$). The symbol $\langle \cdot , \cdot \rangle$ denotes the nondegenerate bilinear pairing between $\MM(T)$ and $\NN(T)$, which extends to a pairing between $\MM(T)_\R$ and $\NN(T)_\R$. 

Given an $n$-dimensional projective toric variety $X$, we denote by $\Sigma\subset \NN(T)_{\R}$  
its associated fan, and write $X=X(\Sigma)$. 
We denote by $\Sigma(1)$ the set of \emph{rays} of $\Sigma$, that is the set of $1$ dimensional cones of $\Sigma$. We recall that every ray $\rho$ is associated with a $T$-invariant prime divisor $D_{\rho}$, and we abuse notation by identifying a ray $\rho$ with its primitive generator in the lattice. The fan of a projective toric variety is \emph{complete}, that is, the cones of $\Sigma$ span $\NN(T)_{\R}$. We will usually consider the case of toric varieties whose fans are \emph{simplicial}, that is, such that the generators of every cone of $\Sigma$ are linearly independent; this is equivalent to say that the variety $X(\Sigma)$ is $\Q$-factorial. 

 Let $X(\Sigma)$ be a a complete toric variety and let  $\Sigma(1)=\{\rho_1,\ldots,\rho_k\}$ be the set of rays of the fan $\Sigma$. Let $b=(b_1,\ldots,b_k)$  be a vector in $\R^k$, and let $D=\sum_{i=1}^k b_iD_{\rho_i}$ be a $T$-invariant $\Q$-Cartier divisor on $X$.  The {\em moment polytope} $P_D$ associated with the pair $(X,\cO_X(D))$ is   
$$P_D = \{ m \in \MM(T)_{\R} \mid \langle \rho_i,m\rangle + b_i \geq 0, \quad i=1,\ldots,k\}.$$
We will essentially use moment polytopes  of {\em big} divisors, which represent birational contractions of toric varieties. 

The  {\tt SageMath} system contains a number of powerful tools to deal with toric varieties and associated combinatorial/convex-geometric objects, such as moment polytopes. Besides the scripts already included in the current version of {\tt SageMath} we have written a number of useful functions in the framework of toric varieties (which we could not find in {\tt SageMath} libraries), that were particularly important for our goals. Among those available in \cite{BOSSage} we highlight the following.

\begin{itemize}[leftmargin=\xx pt,itemsep=5pt]

\item {\tt\verb|poly(rays,D)|}: Computes the moment polytope of a torus invariant divisor $D$ on a projective toric variety. In the input of the function, {\tt rays} is the list of the rays $D_{\rho_i}$, $i=1,\dots,k$ of the fan $\Sigma$ defining $X$, and the divisor $D=\sum_{i=1}^k b_iD_{\rho_i}$ is introduced as the vector $(b_1,\dots,b_k)$.

\item {\tt\verb|projective_bundle(X,M)|}: Given a toric variety $X$ and a matrix $M$, whose rows contain the coefficients of some divisors $D_1,\dots,D_t$ in $X$, the function returns the toric variety $\P_X(\bigoplus_{i=1}^t\cO_X(D_i))$. The script is based on \cite[Section~7.3]{CLS}, and uses the function {\tt\verb|proj_rays(X,M)|}, that computes the rays of the fan of $\P_X(\bigoplus_{i=1}^t\cO_X(D_i))$.

\item {\tt\verb|toric_from_PR(PR)|}: Constructs a toric variety from the set of primitive relations, given as a list of lists.

\item {\tt\verb|mov_cone(X)|}: Computes the movable cone of the toric variety $X$, using the description given in \cite[Proposition 15.2.4]{CLS}.

\item {\tt\verb|secondary_fan(X)|}: Computes the rays and chambers of the secondary fan  of the toric variety $X$ (see \cite[Proposition 15.2.1]{CLS}).

\end{itemize}

More specialized functions, to deal with the birational geometry of a toric variety are the following:

\begin{itemize}[leftmargin=\xx pt,itemsep=5pt]
\item {\tt\verb|same_chamber(X,A,B)|}:
Given Cartier divisors $A$ and $B$ on a toric variety $X$, the function returns a boolean indicating whether the two divisors are Mori equivalent. To do so, the function tests whether the polytope associated with the divisor $A+B$, which corresponds to the {\it Minkowski sum} of the polytopes  $P_A$ and $P_B$, is combinatorially equivalent to both the polytopes $P_A$ and $P_B$. 

\item {\tt\verb|modify(X,A)|}: Construct another random $\Q$-divisor $B$ on $X$ belonging to the same chamber as the divisor $A$. 

\item {\tt\verb|is_wall_crossing(PA,PB)|}:
Given polytopes $P_A$ and $P_B$, associated with big Cartier divisors $A$ and $B$ belonging to the interior of two different maximal dimensional Mori chambers of a toric variety, the function determines if the two chambers share a common wall, comparing the numbers of the facets of $P_A, P_B$ and $P_{A+B}$.
\end{itemize}\par\medskip

Finally, we will need some tools that provide information about the action of a $1$-dimensional subtorus of $T$ on the toric variety $X$, polarized with an ample divisor $D$, with moment polytope $P=P_D$. For simplicity, we will consider a fixed isomorphism $T\simeq (\C^*)^{\dim(X)}$, and consider  the $\C^*$-action given by the inclusion $\C^*\hookrightarrow T$ corresponding to the projection $\mu_j:\Mo(T)\to\Mo(\C^*)$  to the $j^{\mbox{\scriptsize th}}$-coordinate. 

\begin{itemize}[leftmargin=\xx pt,itemsep=5pt]

\item {\tt\verb|sort_fixed_points(P,j)|}: Returns the weights, the number of vertices of the polytopes of the different connected fixed points components of $X^{\C^*}$ and the criticality of the action (partial outputs can be obtained with the functions {\tt\verb|weights(P,j)|}, {\tt\verb|fixedpoints(P,j)|}, {\tt\verb|criticality(P,j)|}).

\item {\tt\verb|pruning(P,j,a,b)|}: Computes the pruning of $X$  at $a,b$ described in Proposition \ref{proposition:Pruning}. This actions takes as input the polytope $P$, an integer $j$ corresponding to the coordinate determining the action, and two rational numbers $a,b$. 

\item {\tt\verb|quotient(P,j,a)|}: Given a rational number $a$, this function returns the GIT-quotient $\GX(a)$ of $X$ by the $\C^*$-action defined by the integer $j$, obtained by using the pruning function previously described, with the values  $a=b$.

\item {\tt\verb|geometric_quotients(P,j)|}:
Returns the list of all geometric quotients of $X$ with respect to the action corresponding to the projection $\mu_j:\Mo(T)\to\Mo(\C^*)$  to the $j^{\mbox{\scriptsize th}}$-coordinate.

\end{itemize}


\section{Geometric realizations of birational maps}\label{section:geometricreal}

Let us start from a birational map $\phi: Y_-\dashrightarrow Y_+$. We will consider factorizations of $\phi$ into ``elementary'' birational transformations; more concretely, we will be interested in the following types of factorizations: 

\begin{definition}\label{def:sharp}
Given a birational map $\phi: Y_-\dashrightarrow Y_+$ between normal $\Q$-factorial projective varieties, a factorization of $\phi$: 
\[
\xymatrix{Y_-\ar@{=}[r]\ar@<1ex>@/^1.3pc/@{-->}[rrrrrr]^{\phi}&Y_1\ar@{-->}_{\phi_1}[r]&Y_2\ar@{-->}_{\phi_2}[r]&\ldots \ar@{-->}_{\phi_{s-3}}[r]&Y_{s-2}\ar@{-->}_{\phi_{s-1}}[r]&Y_s\ar@{=}[r]&Y_+.}
\] 
as a composition of birational maps $\phi_i$ is called {\em directed} if there exist a $\Q$-factorial variety $Y$, Cartier divisors $A,B$ on $Y$ and positive rational numbers $\tau_2<\tau_3<\dots<\tau_{s-1}$, such that: 
\begin{itemize}[leftmargin =\xx pt]
\item $Y_1=\Proj R(Y;A)$, $Y_s=\Proj R(Y;B) $, and $Y_k=\Proj R(Y;A+\tau_{k}B)$ for every $k=2,\dots,s-1$;
\item for every $k=1,\dots,s-1$, we have $\phi_k=\rho_{k+1}\circ \rho_k^{-1}$, where  $\rho_k$ denotes the natural map $Y\dashrightarrow Y_k$. 
\end{itemize}
If moreover the maps $\phi_k$, for $k=1,\dots,s-1$, are either elementary divisorial contractions, extractions, or flips (corresponding to wall-crossings in the Mori chamber decomposition of the effective cone of the corresponding varieties), then we say that the factorization is {\em sharp}.
\end{definition} 

\begin{remark}\label{rem:sharp}
If the map $\phi$ is a small $\Q$-factorial modification between two Mori Dream Spaces, then a directed resolution of $\phi$ always exists. We 
identify $\NU(Y_-)$ with $\NU(Y_+)$, and we consider classes of Cartier divisors $A,B \in \NU(Y_-)$ such that $A$ is ample on $Y_-$ and $B$ is ample on $Y_+$. The segment joining $A$ and $B$ in $\NU(Y_-)$ intersects a finite number of facets of the Mori chambers of $Y_-$, that correspond to small modifications $Y_k$ of $Y_-$, and that provides the directed factorization of $\phi$.  
Moreover, by taking $A,B$ general in the ample cones of $Y_-,Y_+$, the factorization that we get will be sharp.
\end{remark}

We want to construct $\C^*$-actions that have $\phi$ as associated birational map. More concretely, we recall the following concept, that has been introduced in \cite{WORS4}:

\begin{definition}\label{definition:GeometricRealization}
	Let $\phi: Y_-\dashrightarrow Y_+$ be a birational map between normal projective varieties. A \emph{geometric realization} of $\phi$ is a normal projective variety $X$, endowed with a $\C^*$-action of B-type, whose associated birational map (see Proposition \ref{prop:assocbirmap}) coincides with $\phi$. A geometric realization of $\phi$ determines a factorization (\ref{eq:factor}) of the map, which is always directed; if the factorization is sharp, we say that the geometric realization is {\em sharp}.
\end{definition}

\subsection{Geometric realizations of maps of dream type}\label{ssection:GeometricRealization}

The problem of existence  of geometric realizations of small $\Q$-factorial modifications has been dealt with in \cite{BRUS}. In this section we will briefly recall the results presented there.

\begin{definition}\label{definition:SmallModificationDreamType}
	Let $f \colon Y_-\dashrightarrow Y_+$ be a small modification between normal projective varieties $Y_{\pm}$. The map $f$ is \emph{of dream type} if there exist effective Cartier divisors $A,B$ on $Y_-$ such that:
	\begin{enumerate}[leftmargin=\yy pt]
		\item $A$ is ample;
		\item $Y_+=\Proj \bigoplus_{m\geq 0} \HH^0(Y_-,mB)$;
		\item the multisection ring $$R(Y_-;A,B):= \bigoplus_{a,b\geq 0} \HH^0(Y_-,aA+bB)$$ is a finitely generated $\C$-algebra.
	\end{enumerate}	
\end{definition}

The following statement shows the existence of geometric realizations under the dream type assumption introduced above. 

\begin{theorem}\cite[Theorem 4.1, Corollary 4.10]{BRUS}\label{theorem:Brus}
	Let $\phi: Y_-\dashrightarrow Y_+$ be a small modification of dream type. Then there exists a geometric realization of $\phi$ which is a bordism equalized at the sink and the source.
\end{theorem}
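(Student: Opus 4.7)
The plan is to build $X$ as the Proj of the bigraded multisection ring and read the $\C^*$-action off the second grading. Concretely, the dream type hypothesis ensures that
\[ S=R(Y_-;A,B)=\bigoplus_{a,b\geq 0}H^0(Y_-,aA+bB) \]
is a finitely generated $\C$-algebra, so I define $X:=\Proj S$ with the Proj taken with respect to the total grading $a+b$. The associated ample line bundle $L$ on $X$ has section ring isomorphic to $S$ after regrading by total degree, and the residual bigrading (after absorbing the diagonal implicit in Proj) endows $X$ with a faithful $\C^*$-action linearized on $L$. Geometrically, $X$ arises as the image of the $\P^1$-bundle $W=\P_{Y_-}(\cO(A)\oplus\cO(B))$ under the map defined by $|\cO_W(m)|$ for $m\gg 0$, whose section ring coincides with $S$.

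Next I would identify the GIT quotients. Linearizing so that $H^0(Y_-,aA+bB)$ carries weight $b$, the critical weights range from $0$ to some top value. The weight-$0$ subring is $R(Y_-;A)$, whose Proj is $Y_-$ because $A$ is ample, while the top-weight subring is $R(Y_-;B)$, whose Proj is $Y_+$ by hypothesis. Intermediate linearizations at non-critical rational weights recover the small modifications obtained by crossing the Mori walls along the segment from $A$ to $B$ inside $\NU(Y_-)$, as in Remark \ref{rem:sharp}, producing the directed factorization of $\phi$. This shows that $X$ is B-type with associated birational map $\phi$, hence a geometric realization.

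Finally I would verify the bordism and equalization claims. For the bordism, an inner fixed component $Y\in\cY^\circ$ whose closure $\overline{X^{\pm}(Y)}$ contained a divisor would produce a divisorial wall-crossing among intermediate GIT quotients, contradicting the fact that $\phi$, being a small modification, factors through small modifications at every intermediate Mori wall. For equalization at the sink, the $\P^1$-bundle $W$ carries the standard scalar $\C^*$-action, which is equalized everywhere; the contraction $W\to X$ is an isomorphism in \'etale neighborhoods of the $A$-section (since $A$ is ample there), so equalization descends to the sink of $X$. Equalization at the source follows by applying the symmetric argument to $\phi^{-1}:Y_+\dashrightarrow Y_-$, which is itself of dream type after exchanging the roles of $A$ and $B$ (using that $\phi$ is small to transport the divisors across).

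The main obstacle I anticipate is the local analysis of the contraction $W\to X$ away from the extremal sections: when $B$ fails to be semiample the map may contract divisors in the interior of $W$, and one must verify that these contractions never introduce spurious divisorial components in the inner fixed loci of $X$. This is ultimately the technical reason the finite generation built into the dream type condition is indispensable, and checking it carefully requires a precise comparison between the Mori chamber decomposition of $Y_-$ near the segment $[A,B]$ and the structure of the $\C^*$-invariant fixed loci of $X$.
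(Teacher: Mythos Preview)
This theorem is not proved in the paper; it is quoted from \cite[Theorem~4.1, Corollary~4.10]{BRUS}, and the paper only records a brief sketch of the construction immediately after the statement. Your proposal matches that sketch: the paper says that $X$ is obtained as ``an appropriately chosen projective GIT $\C^*$-quotient of the spectrum of a multisection ring $R(Y_\pm;A,B)$'', which is exactly your $\Proj S$ with the $\C^*$-action coming from the residual grading, and the description of the geometric quotients $\GX_i$ as $\Proj R(Y_-;L_{|Y_-}-\tau_i Y_{-|Y_-})$ is your identification of the weight-$0$ and top-weight subrings with $R(Y_-;A)$ and $R(Y_-;B)$.

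One point worth tightening in your bordism argument: you assert that a divisorial inner BB-cell would force a divisorial wall-crossing, ``contradicting the fact that $\phi$, being a small modification, factors through small modifications at every intermediate Mori wall''. This last clause is not a tautology---a priori a small map could factor through a divisorial contraction followed by an extraction. The reason it holds here is that $A$ is ample and $B$ is the $\phi$-pullback of an ample divisor on the small modification $Y_+$, so both lie in $\ol{\Mov(Y_-)}$, and by convexity the entire segment $[A,B]$ lies in $\ol{\Mov(Y_-)}$; hence every intermediate $\Proj R(Y_-;A+\tau B)$ is a small modification of $Y_-$. With that one sentence added, your outline is sound and is the same construction the paper (and \cite{BRUS}) uses.
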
 

Roughly speaking, the geometric realization $X$ of a small modification provided by the above statement is constructed by considering an appropriately chosen projective GIT $\C^*$-quotient of the spectrum of a multisection ring $R(Y_\pm;A,B)$. It comes with a linearization on an ample line bundle $L$ satisfying that:
\begin{itemize}[leftmargin=\xx pt]
\item $R(X;L)$ is a graded subalgebra of $R(Y_\pm;A,B)$. 
\item The restrictions of $L$ to the sink and the source are isomorphic to multiples of $A,B$, respectively.
\item Denoting by $a_0=0<a_1<\dots<a_r=\delta$ the critical values of the action on $(X,L)$ --so that the $\C^*$-action on $X$ has bandwidth $\delta$ and criticality $r$--, the cone $\cC = \langle L_{\mid Y_-}, L_{\mid Y_-}-\delta Y_{-\mid Y_-}\rangle$ admits a decomposition $\cC = \bigcup_{i=0}^{r-1} \cC_i$, where $\cC_i$ is the interior of $\langle L_{\mid Y_-}-a_{i-1} Y_{-\mid Y_-}, L_{\mid Y_-}-a_{i} Y_{-\mid Y_-}\rangle$ and for any $D\in \cC_i$ it holds that $\Proj R(X;D)\simeq \GX_i$. 
\end{itemize}

In other words, if $\tau_i\in (a_{i-1},a_i)$, then 
\[\GX_i=\Proj(R(Y_-;L_{\mid Y_-}-\tau_{i} Y_{-\mid Y_-})) \]
The geometric realization $(X,L)$ provides then a factorization of $\phi$:
\[\xymatrix{Y_-\ar@{=}[r]\ar@<1ex>@/^1.3pc/@{-->}[rrrrrr]^{\phi}&\GX_1\ar@{-->}_{\phi_1}[r]&\GX_2\ar@{-->}_{\phi_2}[r]&\ldots \ar@{-->}_{\phi_{s-3}}[r]&\GX_{s-2}\ar@{-->}_{\phi_{s-1}}[r]&\GX_s\ar@{=}[r]&Y_+.}
\]

As already explained in Proposition \ref{proposition:Pruning}, we may then consider the pruning $X_{i,j}$ of $X$ at $i,j$, that is the projective spectrum of the algebra
\[
\bigoplus_{m\geq 0}\bigoplus_{m\tau_-\leq k\leq m\tau_+}\HH^0(X,mL)_k\subset R(X;L),
\]
with $\tau_- \in (a_i,a_{i+1})\cap \Q$, $\tau_+\in(a_j,a_{j+1})\cap \Q$. Recall that the pruning $X_{i,j}$ is birational to $X$, and  inherits a $\C^*$-action of  criticality smaller than or equal to $r$, with sink $\GX_i$ and source $\GX_j$. In the particular case in which $\tau_-,\tau_+$ belong to the same interval $(a_{i-1},a_{i})$, then $X_{i,i}$ is a $\P^1$-bundle over the variety $\GX_i$.  

This construction yields the following idea --that we will use later on in situations different from the one of small $\Q$-factorial modifications considered above--: the geometric realization $X$ can be obtained as an unpruning of the $\P^1$-bundle $X_{i,i}$.

\subsection{Proof of Theorem \ref{thm:main}}\label{ssec:unpruning}

The goal of this section is to prove the existence of geometric realizations  for birational maps between two birational contractions of an MDS. In other words, here we will consider the following situation:

\begin{setup}\label{set:birational resolution}
Let $Y$ be a projective normal $\Q$-factorial variety, together with two birational contractions to two normal $\Q$-factorial projective varieties $Y_\pm$:
\[
\xymatrix{&&Y\ar@{-->}[lld]_{\pi_-}\ar@{-->}[rrd]^{\pi_+}&&\\Y_-\ar@{-->}[rrrr]^{\phi:=\pi_+\circ \pi_-^{-1}}&&&&Y_+}
\]
We will assume that $Y$ is an MDS (although the construction below of geometric realizations works under milder conditions similar to Definition \ref{definition:SmallModificationDreamType}). We will denote by $A,B\in \Pic(Y)$ two big Cartier divisors on $Y$ supporting the birational contractions $\pi_-,\pi_+$, respectively. 
\end{setup}

\begin{construction}\label{cons:geomreal}
In the above Setup, we choose a positive integer $\ell$ such that
\begin{equation*} 
H:=\dfrac{1}{\ell}(B-A) 
\end{equation*} 
is a Cartier divisor,
and a sufficiently large integer $m$ such that every nonempty intersection of the set $\{mA+tH|\,\,0\leq t\leq m\ell\}$ with the interior of a Mori chamber of $Y$ contains at least two elements of the form $mA+kH$, $k\in \Z$. We then consider the $\P^1$-bundle:
$$W:=\P_Y(\cO_Y\oplus \cO_Y(H)),$$
which is a $\Q$-factorial variety of Picard number $\rho_Y+1$, and supports a natural fiberwise equalized  $\C^*$-action whose sink and source are the sections $D_-,D_+$ of the natural projection $\pi:W\to Y$ corresponding to the quotients $\cO_Y\oplus \cO_Y(H)\to \cO$, $\cO_Y\oplus \cO_Y(H)\to \cO(H)$, respectively.  We will define $X$ in the following way:
\begin{equation}\label{eq:gr}
X:=\Proj R\big(W;m(\ell L+\pi^*A)\big),
\end{equation}
where $L$ denotes a divisor such that $\cO_W(L)$ is the tautological line bundle $\cO_W(1)$.
\end{construction}

The proper definition of $X$ is justified as follows. Note first that by definition $\ell L+\pi^*A$ is a big divisor in $W$. Since we have assumed that $Y$ is an MDS, it follows from \cite[Theorem~3.2]{Bro} that $W$ is an MDS as well. In particular the $\C$-algebra $R\big(W;m(\ell L+\pi^*A)\big)$ is finitely generated, and its projectivization $X$ is well defined and normal. It is a birational contraction of $W$.

Note also that the variety $X$ constructed above inherits a $\C^*$-action of B-type, with sink $\Proj R(Y;mA)=Y_-$ and source $\Proj R(Y;m(\ell H+A))=Y_+$, and whose associated birational map is $\phi=\pi_+\circ \pi_-^{-1}$. In other words:

\begin{proposition}\label{prop:geomrealexists}
In the situation of Setup \ref{set:birational resolution} the variety $X$ defined in Construction \ref{cons:geomreal} is a geometric realization of $\phi: Y_-\dashrightarrow Y_+$. \qed
\end{proposition}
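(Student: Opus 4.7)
The plan is to verify, in three stages, the claims built into Construction \ref{cons:geomreal}: (i)~descend the $\C^*$-action from $W$ to $X$ together with a linearization; (ii)~compute the weight decomposition of the relevant section ring to identify the extremal GIT quotients with $Y_\pm$; (iii)~match the associated birational map with $\phi$ via a general-orbit argument.

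\emph{Step 1.} The fibrewise scaling $\C^*$-action on $\cO_Y\oplus\cO_Y(H)$ giving weights $0$ and $1$ to the two summands induces a faithful $\C^*$-action on $W$ fixing pointwise the sections $D_\pm$, and linearizes the tautological bundle $L=\cO_W(1)$ with weight $0$ on $D_-$ and weight $1$ on $D_+$. Letting $\C^*$ act trivially on $\pi^*A$, the divisor $\ell L + \pi^*A$ acquires a linearization with extremal weights $0$ and $\ell$. Since $R(W;m(\ell L+\pi^*A))$ is therefore $\C^*$-graded, the action descends to $X$, and the induced linearization is carried by an ample line bundle $\tilde L$ on $X$ with $X=\Proj R(X;\tilde L)$.

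\emph{Step 2.} By the projection formula and the splitting $\pi_*\cO_W(d)=\bigoplus_{k=0}^d\cO_Y(kH)$, the weight-$k$ piece of $\HH^0(W,m(\ell L + \pi^*A))$ is identified with $\HH^0(Y,kH+mA)$ for $0\leq k\leq m\ell$, whence
\[
\cA(\tau)=R(Y;A+\tau H) \quad \text{for every } \tau\in[0,\ell]\cap\Q.
\]
In particular $\GX(0)=\Proj R(Y;A)=Y_-$ and $\GX(\ell)=\Proj R(Y;B)=Y_+$, so the extremal critical values are $a_0=0$ and $a_r=\ell$. The numerical condition on $m$ in Construction \ref{cons:geomreal} guarantees that $A+\tau H$ remains in the Mori chamber of $A$ (respectively $B$) for $\tau\in(0,a_1)$ (respectively $\tau\in(a_{r-1},\ell)$), so $\GX_\pm\cong Y_\pm$ and the action is of B-type.

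\emph{Step 3.} A general $\C^*$-orbit of $X$ lifts, through the birational contraction $W\dashrightarrow X$, to the general orbit $\pi^{-1}(y)\setminus(D_-\cup D_+)$ of $W$, for a general $y\in Y$; its closure in $X$ meets the sink at the image of $D_-\cap\pi^{-1}(y)$ and the source at the image of $D_+\cap\pi^{-1}(y)$. Since $(\ell L + \pi^*A)|_{D_-}=A$ and $(\ell L + \pi^*A)|_{D_+}=B$, under the identifications $D_\pm\cong Y$ the restrictions $D_\pm\dashrightarrow Y_\pm$ of the map $W\dashrightarrow X$ coincide with $\pi_\pm$, so those two limit points are $\pi_-(y)$ and $\pi_+(y)$. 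Hence two general points $y_-\in Y_-, y_+\in Y_+$ represent the same orbit in $X$ if and only if $y_+=\pi_+\pi_-^{-1}(y_-)=\phi(y_-)$, identifying the associated birational map with $\phi$.

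The hardest part will be the B-type verification inside Step 2: one must control the Mori chamber decomposition of the ring $R(Y;A,B)$ near the endpoints of the ray $\{A+tH\}_{t\in[0,\ell]}$ and ensure that no chamber is skipped. This is precisely the role of the numerical condition imposed on $m$ in Construction \ref{cons:geomreal}, which forces the relevant integer weights to land in the interiors of the chambers corresponding to $Y_-$ and $Y_+$.
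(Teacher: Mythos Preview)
Your proposal is correct and follows essentially the same approach as the paper, which in fact dispatches the proposition in the single paragraph preceding it (note the \qed): compute the weight pieces of $R(W;m(\ell L+\pi^*A))$ via $\pi_*\cO_W(d)=\bigoplus_k\cO_Y(kH)$, read off sink $\Proj R(Y;mA)=Y_-$ and source $\Proj R(Y;mB)=Y_+$, and observe that the induced birational map is $\pi_+\circ\pi_-^{-1}$. Your three steps simply make this explicit.

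One small correction: the B-type property does not hinge on the numerical condition imposed on $m$. What you need is that for $\tau$ slightly above $0$ (respectively slightly below $\ell$) the divisor $A+\tau H$ stays in the Mori chamber of $A$ (respectively $B$); this follows because $Y_\pm$ are $\Q$-factorial, so their Mori chambers are top-dimensional and $A,B$ can be taken in their interiors. The condition on $m$ in Construction~\ref{cons:geomreal} is there for a different reason---to guarantee enough integer weights inside each chamber so that the unpruning description of Remark~\ref{rem:geomrealunprun} goes through---and is not needed for Proposition~\ref{prop:geomrealexists} itself.
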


\begin{remark}\label{rem:sharp}
By slightly modifying $A$, $B$ within the corresponding Mori chambers of $Y$, we may assume that the geometric realization $X$ is sharp.
\end{remark}

\begin{remark}\label{rem:geomrealunprun} 
Given a positive integer $k\leq m\ell-1$ such that the classes of the divisors $mA+kH$, $mA+(k+1)H$ belong to the same Mori chamber of $Y$, then the pruning of $X$: 
\[
\begin{split}
W':=&\Proj \bigoplus_{r\geq 0}\bigoplus_{rk\leq j\leq r(k+1)} \HH^0(W,rm(\ell L+\pi^*A))_j=\\
=&\Proj \bigoplus_{r\geq 0}\bigoplus_{rk\leq j\leq r(k+1)}\HH^0\big(Y,\cO_{Y}(rmA+jH)\big)=\\
=& \Proj \bigoplus_{r\geq 0}\HH^0\big(Y,S^r\big(\cO_{Y}(mA+kH)\oplus\cO_{Y}(mA+(k+1)H)\big)\big)
\end{split}
\]
is a $\P^1$-bundle over the variety $Y':=\Proj R(Y;mA+kH)$, which is a birational contraction of $Y$.  

Let us now assume that there exists $k$ such that $mA+kH,mA+(k+1)H$ belong to the ample cone of $Y$ so that $W'\simeq W$: alternatively, one may substitute $Y$ by a birational contraction $\Proj R(Y;mA+kH)$ of maximal Picard number among all the birational contractions $\Proj R(Y;mA+sH)$, $s=0,\dots,m$.
Then we see that $X$ is an unpruning of the $\P^1$-bundle $W$. Note also that, setting $L':=L+\pi^*(mA+kH)$, $X$ can be written as
$$
X=\Proj R(W; L'+kD_-+(m\ell-k-1)D_+). 
$$ 
\end{remark}

\begin{proposition}\label{prop:Qfact}
In the situation of Setup \ref{set:birational resolution}, choose the divisors $A,B$ so that the geometric realization $X$ is sharp. Then $X$ is an MDS. 
\end{proposition}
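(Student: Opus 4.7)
The proof strategy is to realize $X$ as a $\Q$-factorial birational model of the auxiliary $\P^1$-bundle $W = \P_Y(\cO_Y\oplus \cO_Y(H))$ appearing in Construction \ref{cons:geomreal}, and then to transfer the Mori Dream Space property from $W$ to $X$.

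First, I would note that $W$ is itself an MDS. Since $Y$ is an MDS by the hypothesis of Setup \ref{set:birational resolution}, Brown's result [Bro, Theorem~3.2] --- already invoked in Construction \ref{cons:geomreal} to ensure finite generation of $R(W;m(\ell L+\pi^*A))$ --- yields that any projective bundle over $Y$, and in particular $W$, is again an MDS. Consequently $\Eff(W)_\Q$ admits a finite Mori chamber decomposition, and each maximal-dimensional chamber corresponds to a $\Q$-factorial birational model of $W$ that is itself an MDS, via the Hu--Keel characterization.

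Second, by the defining formula $X=\Proj R(W;m(\ell L+\pi^*A))$, the variety $X$ is precisely the birational model of $W$ associated with the Mori chamber of $\Eff(W)_\Q$ containing the big class of $\ell L+\pi^*A$. The sharpness hypothesis, arranged by the generic choice of $A$ and $B$ in Remark \ref{rem:sharp}, should translate into the stronger assertion that $[\ell L+\pi^*A]$ lies in the \emph{interior} of a maximal-dimensional Mori chamber of $W$. Indeed, each elementary transition $\phi_i$ in the $\C^*$-factorization of $\phi$ corresponds to a codimension-one wall-crossing in $\Eff(W)_\Q$ encountered by the segment $\{[\ell L+\pi^*A]+t[D_-] : t\in[0,m\ell]\}$, and the sharpness condition forces each such wall to be elementary and to be met transversely at an interior parameter of the segment. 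This forces $X$ to be $\Q$-factorial, and then the Hu--Keel theorem gives that $X$ is an MDS, with Cox ring obtained as an appropriate graded subring of $\Cox(W)$.

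The main obstacle is step two: making rigorous the identification between the $\C^*$-GIT wall-crossings encountered along the GIT path $\tau\mapsto \GX(\tau)$ and the Mori wall-crossings of $W$ along the corresponding segment in $\Eff(W)_\Q$. In particular one has to rule out degenerate configurations --- for instance several Mori walls of $W$ being simultaneously crossed at a single $\C^*$-critical value, or the endpoint class $[\ell L+\pi^*A]$ itself lying on a wall --- all of which are avoided by the same sort of genericity on $A,B$ (within their ambient Mori chambers on $Y$) that produces a sharp factorization of $\phi$.
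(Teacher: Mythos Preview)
Your reduction is the same as the paper's: $W$ is an MDS by Brown, $X$ is a birational contraction of $W$, so by \cite[Proposition~1.11]{HuKeel} it remains only to show that $X$ is $\Q$-factorial.

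Where you diverge from the paper --- and where the real gap lies --- is in how you attack $\Q$-factoriality. You propose to show that $[\ell L+\pi^*A]$ sits in the interior of a maximal Mori chamber of $\ol{\Eff(W)}$, hoping that sharpness guarantees this. But sharpness is a statement about how the segment from $[A]$ to $[B]$ crosses walls in $\ol{\Eff(Y)}$: it controls the birational type of the maps $\GX_i\dashrightarrow\GX_{i+1}$ between geometric quotients, all of which are models of $Y$, not of $W$. The wall structure of $\ol{\Eff(W)}$ contains, beyond the walls pulled back via $\pi^*$, walls separating the various prunings $X_{i,j}$ from one another, and it is not a priori clear that generic position in $\ol{\Eff(Y)}$ forces generic position in $\ol{\Eff(W)}$. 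Your segment $\{[\ell L+\pi^*A]+t[D_-]\}$ may have its \emph{intermediate} wall-crossings governed by sharpness, but that says nothing about whether the \emph{endpoint} $[\ell L+\pi^*A]$ itself lies on a wall --- and the endpoint is what determines $X$. In effect you have restated the conclusion (``$X$ is $\Q$-factorial'' $\Leftrightarrow$ ``the class lies in a chamber interior'') rather than proved it.

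The paper sidesteps any global chamber analysis on $W$. It argues locally: the non-$\Q$-factorial locus is closed and $\C^*$-invariant, so it suffices to check $\Q$-factoriality near each fixed point. Near an extremal fixed point, $X$ is locally a $\P^1$-bundle over the $\Q$-factorial variety $\GX_\pm$. Near an inner fixed point of weight $a_i$, $X$ is locally isomorphic to the pruning $X_{i-1,i+1}$, itself a criticality-two geometric realization of the single step $\GX_i\dashrightarrow\GX_{i+1}$. Sharpness now enters decisively: this step is an elementary flip or divisorial contraction/extraction. The paper rewrites this local model as an unpruning of a $\P^1$-bundle $W'$ over $Y_-=\GX_i$ (not over $Y$), identifies the relevant divisor class as supporting a single wall-crossing in $\ol{\Eff(W')}$, and concludes $\Q$-factoriality directly from the MDS property of $W'$. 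So the paper does ultimately use a ``class in a chamber'' argument, but only after the local reduction makes it a one-wall computation on $W'$ rather than a global statement on $W$.
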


\begin{proof}
Since $X$ is a birational contraction of $W$, which is an MDS, it is enough to show that $X$ is $\Q$-factorial (cf. \cite[Proposition~1.11]{HuKeel}). Moreover, it suffices to prove that  $X$ is $\Q$-factorial locally (in the Zariski topology) around every $\C^*$-fixed point of $X$. Indeed, if $X$ is not locally $\Q$-factorial in a neighborhood of a point $P$, then $X$ is not $\Q$-factorial in a neighborhood of every point $Q\in \overline{\C^*\cdot P}$.

If $P\in X$ is an extremal fixed point of $X$ --for instance, if it belongs to the sink of $X$--, then $X$ is locally isomorphic (around $P$) to a $\P^1$-bundle over the $\Q$-factorial variety $\GX_{1}$; then it is $\Q$-factorial at $P$. If $P$ is an inner fixed point, say of weight $a_i$, then $X$ is locally isomorphic around $P$ to the pruning $X_{i-1,i+1}$, which is a geometric realization of the birational map $Y_i=\GX_{i}\dashrightarrow Y_{i+1}=\GX_{i+1}$; since we are assuming that $X$ is a sharp geometric realization, this map corresponds to a wall-crossing in the effective cone of $Y$. 

This argument shows that, without loss of generality, we may assume that the criticality of the action  is $r=2$, the action is of $B$-type, and the birational map from the sink to the source $\phi:Y_-\dashrightarrow Y_+$ is a wall-crossing.  We have now three possibilities: either $\phi$ is a flip, a divisorial contraction, or a divisorial extraction. The third case reduces to the second by inverting the action of $\C^*$, so we may assume that $\phi$ is a flip or a divisorial contraction. We will denote by $Z\subset X$ the closed subset of inner fixed points of the action. In both cases, we will consider the pruning $W':=X_{0,1}$ which, as in Remark \ref{rem:geomrealunprun}, is a $\P^1$-bundle over $Y_-=\GX_{1}$, of the form $\P(\cO_{Y_-}\oplus\cO_{Y_-}(H'))$ for a certain Cartier divisor $H'$ on $Y_-$; in particular, by \cite[Theorem~3.2]{Bro}, $W'$ is an MDS. Let us denote by $\pi':W'\to Y_-$ the natural projection, by $L'$ a divisor associated with the tautological line bundle $\cO_{W'}(1)$, and by $D'_+\subset W'$ the section associated with the surjection $\cO_{Y_-}\oplus\cO_{Y_-}(H')\to \cO_{Y_-}(H')$. In this situation, $Y_+$ corresponds to a Mori chamber in $\ol{\Eff(Y_-)}$, adjacent to the nef cone of $Y_-$. 

As an unpruning of $W'$, $X$ can be written as $\Proj R(W';L'+\pi^*A'+kD'_+)$, for a line bundle $A'$ on $Y_-$ and a positive integer $k$ such that $(L'+\pi^*A'+kD'_+)_{|D'_+}=(k+1)H'+A'$ belongs to the interior of the Mori chamber of $Y_+$. In particular $\phi$ can be identified with the natural map $Y_-\to \Proj R(Y_-;(k+1)H'+A')$, which is a flip or a divisorial contraction corresponding  to the crossing of the wall defined by an effective $1$-cycle $C$ in $Y_-$. In particular, there exists a positive rational $q\leq k$ such that $(qH'+A')\cdot C=0$. Identifying $C$ with the corresponding $1$-cycle in the section $D'_+\subset W$, we then have that $L'+\pi^*A'+(q-1)D'_+$ is a nef divisor in $W'$, supporting the contraction of the ray generated by the class of $C$, and $L'+\pi^*A'+kD'_+$ supports the crossing of the wall defined by $C$. In particular, since $W'$ is an MDS, we conclude that also $\Proj R(W';L'+\pi^*A'+kD'_+)$ is an MDS, hence $\Q$-factorial. 
\end{proof}

\subsection{When are geometric realizations Fano?}\label{ssec:fanogeomreal}

Let us observe that, in the situation of Setup \ref{set:birational resolution}, with the notation of the previous section, a geometric realization $X$ of the birational map $\phi$ is constructed as a birational contraction of a $\P^1$-bundle $W=\P(\cO_Y\oplus \cO_Y(H))$. The anticanonical divisor of $W$ is:
\[
-K_{W}=D_-+D_+-\pi^*K_Y.
\]

By hypothesis, $Y$ is an MDS; we will further assume in this section that $Y$ is ($\Q$-Gorenstein) {\em Fano}, in the sense that its anticanonical divisor is ample.

\begin{remark}\label{rem:BCHM}
Note that if $Y$ is a $\Q$-factorial Fano variety with log terminal singularities, then the MDS condition of Setup \ref{set:birational resolution} follows from \cite[Corollary~1.3.1]{BCHM}. 
\end{remark}

By the above formula, if $-K_Y$ is ample, then $-K_W$ is effective, the Kodaira--Iitaka dimension of $-K_W$ is at least $\dim Y=\dim X-1$, and equality holds  when $\Proj R(W;-K_W)=Y$. If this is not the case, i.e., if $-K_W$ is big, then a  birational contraction of $W$ will be Fano, as well. It then makes sense to ask whether this birational contraction can be obtained as an unpruning of $W$, so that it will be a geometric realization of a birational map $\phi$ between two birational models $Y_\pm$ of $Y$. 

Note that we may write $-K_{W}=2L-\pi^*(K_Y+H)=2(L-\pi^*(K_Y+H)/2)$, and $L-\pi^*(K_Y+H)/2$ is the tautological divisor of the $\Q$-twisted bundle $(\cO_Y\oplus\cO_Y(H))\langle -\pi^*(K_Y+H)/2\rangle$. Now this is big if --up to change of sign of $H$-- 
\[
-K_Y-H \mbox{ is big, and } -K_Y+H \mbox{ is effective.}
\]
For simplicity, we will consider  the case in which both $-K_Y\pm H$ are big, and  belong to the interior of the Mori chambers of $\ol{\Eff(W)}$. 

In the case in which $-K_Y\pm H$ are ample, then $W$ is a Fano $\P^1$-bundle; then the arguments here  can be considered as an extension of the results on Fano bundles, which have been extensively studied in the literature as a way of constructing Fano varieties (see, for instance \cite{MOS4,SSW,SW1,SW2,APW}).

For an integer $m$ large enough,  $-(mK_Y+H)$ is ample in $Y$, so that the divisor
$L':=L-\pi^*(mK_Y+H)$ is ample in $W$. Moreover we may write:
\[
\begin{split}
L'+(m-1)D_-+m D_+&=L-\pi^*(mK_Y+H)+(m-1)(L-\pi^*H)+mL=\\&=m(2L-\pi^*(K_Y+H))=-mK_W;
\end{split}
\]
this tells us that the unpruning  $X:=\Proj R(W;L'+(m-1)D_-+m D_+)$ is equal to $\Proj R(W,-K_W)$ which is a Fano variety.


\section{Geometric realizations of toric birational maps}\label{section:toric}

 In this section we will work under the following hypotheses:
 
 \begin{setup}\label{set:toric}
 Let $Y$ be a projective normal $\Q$-factorial toric variety, together with two birational toric contractions to projective normal $\Q$-factorial varieties $Y_\pm$:
\[
\xymatrix@R=25pt@C=35pt{&Y\ar@{-->}[ld]_{\pi_-}\ar@{-->}[rd]^{\pi_+}&\\Y_-\ar@{-->}[rr]^{\phi:=\pi_+\circ \pi_-^{-1}}&&Y_+}
\]
We consider two big Cartier divisors $A,B\in \Pic(Y)$ supporting the birational contractions $\pi_-,\pi_+$, respectively. 
The varieties $Y_\pm$ are then toric, and the birational maps $\pi_-,\pi_+,\phi$ are torus-equivariant. 
 \end{setup}

Thanks to Theorem \ref{thm:main}, we know that the geometric realization $X$ of Construction \ref{cons:geomreal} is a MDS; furthermore we have that:

\begin{proposition}\label{prop:toricgeomreal}
	In the situation of Setup \ref{set:toric}, the geometric realization $X$ of $\phi$, given in Construction \ref{cons:geomreal}, is a projective normal $\Q$-factorial toric variety.
\end{proposition}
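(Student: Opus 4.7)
The plan is to follow Construction \ref{cons:geomreal} step by step and verify that each ingredient inherits a toric structure from $Y$. The starting observation is that, since the birational contractions $\pi_{\pm}$ are torus-equivariant, we may choose the Cartier divisors $A$ and $B$ supporting them to be $T$-invariant (any Cartier divisor on a toric variety is linearly equivalent to a $T$-invariant one, and the section ring depends only on the linear equivalence class). Consequently $H=(B-A)/\ell$ is a $T$-invariant Cartier divisor on $Y$, and both summands of $\cO_Y \oplus \cO_Y(H)$ carry natural $T$-equivariant structures.

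The first step is to show that $W = \P_Y(\cO_Y \oplus \cO_Y(H))$ is itself toric. Because $\cO_Y \oplus \cO_Y(H)$ is a split sum of $T$-equivariant line bundles, the fan of $W$ can be constructed explicitly from the fan of $Y$ and the piecewise-linear function defining $H$, as recalled in \cite[Section~7.3]{CLS} (this is precisely the content of the function {\tt\verb|projective_bundle|} of Section \ref{section:preliminaris1}). The acting torus on $W$ is $T \times \C^*$, where the second factor is the fiberwise $\C^*$-action on the $\P^1$-bundle used throughout the paper. The tautological divisor $L$ and the pullback $\pi^*A$ are both torus-invariant, so the polarizing divisor $m(\ell L+\pi^*A)$ is torus-invariant on $W$.

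The second step is to show that $X = \Proj R(W; m(\ell L + \pi^* A))$ inherits a toric structure. Since the polarizing divisor is $(T\times \C^*)$-invariant, each graded piece $\HH^0(W, km(\ell L + \pi^* A))$ decomposes into character spaces of $T \times \C^*$, endowing the section ring with a multi-grading compatible with the grading by $k$. Taking $\Proj$ with respect to $k$ yields a variety carrying an action of $T \times \C^*$, which is faithful and admits an open dense orbit because $X$ is birational to $W$ and hence has dimension $\dim Y + 1 = \dim(T \times \C^*)$. Projectivity is immediate from the $\Proj$ construction; normality was already established in Proposition \ref{prop:geomrealexists}; and $\Q$-factoriality follows from Proposition \ref{prop:Qfact}, possibly after perturbing $A$ and $B$ within their Mori chambers to make the geometric realization sharp (a perturbation that does not affect torus-equivariance, since $T$-invariant representatives always exist).

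The main obstacle is to argue rigorously that the $\Proj$ of a multigraded section ring genuinely inherits a toric structure rather than merely a $T\times\C^*$-equivariant one; this reduces to the dimension count just above, together with the observation that the open $T \times \C^*$-orbit of $W$ meets the locus on which the birational contraction $W \dashrightarrow X$ is an isomorphism, so it survives as an open orbit in $X$.
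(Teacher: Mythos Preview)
Your proof is correct and follows the same approach as the paper, which simply observes that $W=\P_Y(\cO_Y\oplus\cO_Y(H))$ is toric (as a $\P^1$-bundle over the toric $Y$ with torus-invariant splitting) and that $X$, being a birational contraction of $W$, inherits the toric structure. You spell out what the paper leaves implicit---the $T\times\C^*$-grading on the section ring and the survival of the dense orbit under the birational map $W\dashrightarrow X$---and correctly appeal to Proposition~\ref{prop:Qfact} for $\Q$-factoriality.
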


\begin{proof}
Following Construction \ref{cons:geomreal}, the geometric realization $X$ can be described as a birational contraction of a $\P^1$-bundle over $Y$ of the form $W=\P_Y(\cO_Y \oplus \cO_Y(H))$. Since $W$ is a toric variety, the conclusion follows.  
\end{proof}

In the toric setting, Construction \ref{cons:geomreal} can be translated into combinatorial/convex--geometric language, and programmed conveniently  to produce examples of geometric realizations. Working in this direction, we have produced the {\tt SageMath} function {\tt geometric\_realization}, that we describe hereafter. The input of the function consists of the following:
\begin{itemize}[leftmargin=\xx pt]
	\item the toric variety $Y$;
	\item the Cartier divisors $A$ and $B$ supporting the contractions $\pi_{\pm}$.
	\item an integer $\ell$ such that $(B-A)/\ell$ is a Cartier divisor. The default is $\ell=1$.
\end{itemize} 
This function mimics Construction \ref{cons:geomreal}: after setting $H:=(B-A)/\ell$, it computes the rays of the fan of the toric variety $W=\P_Y(\cO_Y \oplus \cO_Y(H))$. To do so, it uses the function  {\tt proj\_rays}, which is part of the function {\tt \verb|projective_bundle|}.

We then construct the vector corresponding to the divisor $D=\ell L+\pi^*A$ described in (\ref{eq:gr}) and the polytope $P_D$ associated with the pair $(W,\cO_W(D))$. Next, we compute an integer $m$ such that $mP_D$ is a lattice polytope, by clearing denominators in the coordinates of the vertices. The function returns this last polytope.\par\medskip

\noindent\begin{minipage}{\textwidth}
\begin{python}
def geometric_realization(Y,A,B,l=1):
    H=(B-A)/l
    r=[list(ray) for ray in (Y.fan()).rays()]
    M=Matrix([[0]*len(H),H])
    pr=proj_rays(r,M)
    L=vector(QQ,[1 if i == len(pr)-2 else 0 for i in range(len(pr))])
    pullback_A=A.concatenate(vector([0,0]))
    m=lcm([x.denominator() for x in weights(poly(pr,l*L+pullback_A))])
    return poly(pr,m*(l*L+pullback_A))
\end{python}
\end{minipage}\par\medskip

To construct a sharp realization, we define the function {\tt \verb|is_sharp|} which checks whether a realization is sharp by checking the birational type of the birational maps between two successive geometric quotients using the function {\tt \verb|is_wall_crossing|}, described in Section \ref{ssection:Toric}.\par\medskip
 
\noindent\begin{minipage}{\textwidth}
\begin{python}
def is_sharp(G,a):
    r=criticality(G,a)-1
    if r==0:
        return true
    Q=[geometric_quotients(G,a)[i] for i in range(r+1)]
    return all(is_wall_crossing(Q[i],Q[i+1]) for i in range(r))
\end{python}
\end{minipage}\par\medskip

As noticed in Remark \ref{rem:sharp}, by slightly moving the rays generated by the divisor $A$ we may find a sharp geometric realization of the map $\phi$.  This is precisely what the function {\tt \verb|sharp_realization(Y,A,B)|} does: after  computing a geometric realization (with $\ell=1$), it checks if it is sharp. If not, the starting divisor is adjusted (in the same chamber) using the function {\tt modify(Y,A)}, described in Section \ref{ssection:Toric}, and a new realization is computed, until a sharp realization is obtained.\par\medskip

\noindent\begin{minipage}{\textwidth}
\begin{python}
def sharp_realization(Y,A,B):
    n,D = 0,[A]
    GR=[geometric_realization(Y,A,B)]
    a=(GR[0]).dimension()
    while not is_sharp(GR[n],a):
            n+=1
            D.append(modify(Y,D[n-1]))
            GR.append(geometric_realization(Y,D[n],B))
    return GR[n]
\end{python}
\end{minipage}

\subsection{Unpruning} \label{ssec:unprun}

As we have seen in Remark \ref{rem:geomrealunprun}, geometric realizations can be described as unprunings of $\P^1$-bundles. When dealing with the birational geometry of  geometric realizations it is useful, (see, for instance, Section \ref{ssec:fanogeomreal}) to have a tool to produce  unprunings of a given geometric realization. The function {\tt unpruning(Y,E,F,a,b)} takes as input a toric variety $Y$, two Cartier divisors $E$ and $F$, and two nonnegative integers $a,b$. It first constructs the projective bundle $\P_Y(\cO(E) \oplus \cO(F))$, then the sections $D_1$ and $D_2$ corresponding to the quotients $\cO(E) \oplus \cO(F) \to \cO(E)$ and $\cO(E) \oplus \cO(F) \to \cO(F)$ and the tautological line bundle $L$. It finally returns the polytope corresponding to the unpruning divisor $L +aD_1+bD_2$.\par\medskip

\noindent\begin{minipage}{\textwidth}
\begin{python}
def unpruning(Y,E,F,a,b):
    r=[list(ray) for ray in (Y.fan()).rays()]
    M=matrix([E,F])
    pr=proj_rays(r,M)
    n=len(pr)
    D1=vector(QQ, [1 if i == n-2 else 0 for i in range(n)])
    D2=vector(QQ, [1 if i == n-1 else 0 for i in range(n)])
    L=M.row(0).concatenate(vector([0,0]))+D1
    return poly(pr,L+a*D1+b*D2)
\end{python}
\end{minipage}\par\medskip

The function above constructs a Fano realization in the toric case, following the steps outlined in  Section \ref{ssec:fanogeomreal}. First we compute an integer $m$ such that the line bundle  $-(mK_Y+H)$ is ample on $Y$ with the function  {\tt \verb|compute_m(Y,H)|}. 
\par\medskip
\noindent\begin{minipage}{\textwidth}
\begin{python}
def compute_m(Y,H):
    l=len(Y.fan().rays())
    m=1
    D=vector([1]*l)+vector(H)
    while not is_ample(Y,D):
        m += 1
        D=vector([m]*l)+vector(H)
    return m
\end{python}
\end{minipage}\par\medskip

Then we use the function {\tt \verb|Fano_realization(Y,H)|} to compute the proper unpruning as described in Section \ref{ssec:fanogeomreal}.
\par\medskip
\noindent\begin{minipage}{\textwidth}
\begin{python}
def Fano_realization(Y,H):
    l=len(Y.fan().rays())
    m=compute_m(Y,H)
    K=vector([1]*l)
    return unpruning(Y,m*K-H,m*K,m-1,m)
\end{python}
\end{minipage}

\subsection{An example: representing birational contractions of a Fano fourfold}\label{ssec:Batyrev}

In order to illustrate the construction of geometric representations of toric birational maps
we have chosen to consider the set of birational contractions of a smooth Fano fourfold $Y$ of Picard number three, more precisely the number $33$ in Batyrev's list (cf. \cite[Proposition~3.1.2~(iii)]{batyrev}). This is defined by a Fano polytope with $7$ vertices $v_1,\dots,v_7\in \NN((\C^*)^4)$ satisfying the following primitive relations:
\[\begin{array}{c}
v_1+v_7=0,\qquad v_2+v_3+v_4=v_1,\qquad v_4+v_5+v_6=2v_1,
\\[3pt]v_5+v_6+v_7=v_2+v_3,\qquad v_1+v_2+v_3=v_5+v_6.\end{array}\]
One may then reconstruct the variety $Y$ out of these relation, and compute its nef, movable and effective cones.
The effective cone $\ol{\Eff(Y)}$ contains six Mori chambers, $N_1,\dots, N_6$. We have represented them in Figure \ref{fig:batyrevG11}.\par 

\begin{figure}[h!!]\label{fig:batyrevG11}
\begin{tikzpicture}[scale=4]
\draw[thick] (-1,0) -- (1,0) -- (0,1.73) -- cycle;

\draw[thin] (0,0) -- (0,1.73);
\draw[thin] (-0.25, 0.5) -- (1,0);
\draw[thin] (-0.25, 0.5) -- (0,1.73);
\draw[thin] (-0.25, 0.5) -- (-1,0);
\draw[thin] (-0.25, 0.5) -- (0,0);
\draw[thick] (-0.25, 0.5) -- (0,0);
\draw[thick] (-0.25, 0.5) -- (0,0.4);
\draw[thick] (0, 0.4) -- (0,0);
%

\node at (-0.4,0.2) {\(N_2\)};
\node at (0.25,0.2) {\(N_3\)};
\node at (-0.35,0.8) {\(N_5\)};
\node at (-0.07,0.3) {\(N_1\)};
\node at (-0.1,0.8) {\(N_4\)};
\node at (0.25,0.8) {\(N_6\)};

\node at (0.9,0.9) {\(\overline{\mathrm{Eff}(Y)}\)};

\filldraw[black] (0,0) circle (0.02) node[below left] {};
\filldraw[black] (1,0) circle (0.02) node[below right] {};
\filldraw[black] (-1,0) circle (0.02) node[below right] {};
\filldraw[black] (0,1.73) circle (0.02) node[above] {};
\filldraw[black] (-0.25,0.5) circle (0.02) node[below right] {};
\filldraw[black] (0,0.4) circle (0.02) node[below right] {};
\end{tikzpicture}
\caption{The Mori chamber decomposition of the Fano $4$-fold~$Y$.\label{fig:batyrevG11}}
\end{figure}
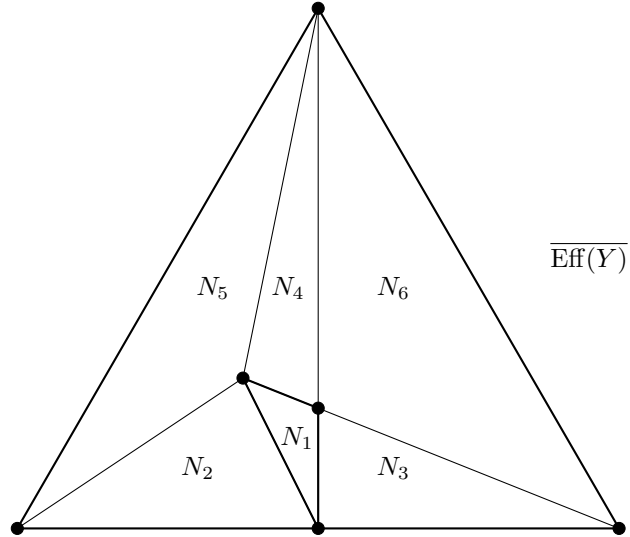

We have chosen the numbering of the chambers so that:
\begin{itemize}[leftmargin=\xx pt]
\item the closure of $N_1$ is the nef cone of $Y$;
\item the closures of $N_2,N_3$ correspond to the nef cones of two small $\Q$-factorial modifications of $Y$, that we denote $Y_2,Y_3$, respectively; in particular $\ol{\Mov(Y)}$ is equal to $\ol{N_1\cup N_2\cup N_3}$;
\item the Mori chambers $N_4,N_5,N_6$ correspond to divisorial contractions $Y_4,Y_5,Y_6$ of $Y,Y_2,Y_3$, respectively.
\end{itemize}
Moreover, one may check that $Y_4,Y_6$ are $\Q$-factorial, but not smooth, whereas $Y,Y_2,Y_3,Y_5$ are smooth.
The relations among these six varieties can be expressed in the following diagram, where dashed arrows correspond to flips and arrows labelled with a $d$ correspond to divisorial contractions:
\[
\xymatrix@C=15mm{Y_5&Y_4\ar@{-->}[r]\ar@{-->}[l]&Y_6\\Y_2\ar[u]_d&Y\ar@{-->}[r]\ar@{-->}[l]\ar[u]_d&Y_3\ar[u]_d}
\] 
Choosing two line bundles $A,B$ in different chambers, corresponding to two varieties $Y_-,Y_+\in\{Y,Y_2,\dots,Y_6\}$, and choosing a divisor $H=B-A$, we construct a geometric realization of the birational map between $Y_-$ and $Y_+$. Obviously different choices of the divisors in the chambers of $Y_\pm$ and of the integer $m$ provide different geometric realizations containing information about different factorizations of the birational map $\phi:Y_-\dashrightarrow Y_+$.

As an example, set $Y_-:=Y_5$, $Y_+:=Y_6$, and consider two different choices of divisors:
\[(A,B)=\left\{\begin{array}{l}(A_1,B_1)=(D_1+2D_6+D_7,D_1+2D_2+D_7)\\(A_2,B_2)=(D_1+6D_6+D_7,D_1+6D_2+D_7)
\end{array}\right.\]
where $D_1,\dots,D_7$ are the torus-invariant divisors corresponding to the vertices of the Fano polytope $v_1,\dots,v_7$, respectively.
We have represented their numerical classes in Figure \ref{fig:batyrevG12}.

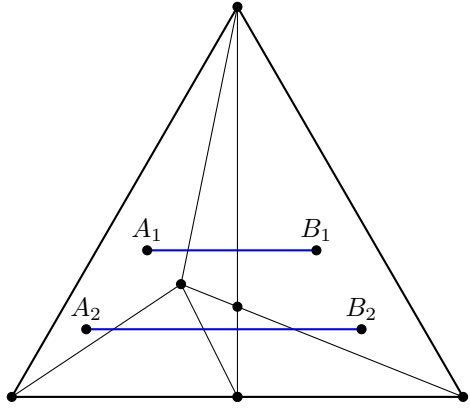
\begin{figure}[h!!]
\begin{tikzpicture}[scale=3]
\draw[thick] (-1,0) -- (1,0) -- (0,1.73) -- cycle;

\draw[thin] (0,0) -- (0,1.73);
\draw[thin] (-0.25, 0.5) -- (1,0);
\draw[thin] (-0.25, 0.5) -- (0,1.73);
\draw[thin] (-0.25, 0.5) -- (-1,0);
\draw[thin] (-0.25, 0.5) -- (0,0);
%
\draw[thick, blue] (0.55,0.3) -- (-0.67,0.3);
\draw[thick, blue] (0.35,0.65) -- (-0.4,0.65);


\filldraw[black] (0.55,0.3) circle (0.02) node[above] {$B_2$};
\filldraw[black] (-0.67,0.3) circle (0.02) node[above] {$A_2$};
\filldraw[black] (0.35,0.65) circle (0.02) node[above] {$B_1$};
\filldraw[black] (-0.4,0.65) circle (0.02) node[above] {$A_1$};
\filldraw[black] (0,0) circle (0.02) node[below left] {};
\filldraw[black] (0,0) circle (0.02) node[below left] {};
\filldraw[black] (1,0) circle (0.02) node[below right] {};
\filldraw[black] (-1,0) circle (0.02) node[below right] {};
\filldraw[black] (0,1.73) circle (0.02) node[above] {};
\filldraw[black] (-0.25,0.5) circle (0.02) node[below right] {};
\filldraw[black] (0,0.4) circle (0.02) node[below right] {};
\end{tikzpicture}
\caption{Two geometric realizations of the birational map between two birational contractions of the Fano $4$-fold $Y$.}\label{fig:batyrevG12}
\end{figure}

In the first case, we obtain a projective toric $5$-fold, singular but $\Q$-factorial, determined by a moment polytope with $18$ vertices in $\MM((\C^*)^5)_\R$, where
eight vertices correspond to the sink  $Y_5$ of the action, 
eight correspond to the source $Y_6$,  
and two vertices correspond to two isolated inner fixed points, 
and they are associated with two flips that we need to perform to transform $Y_5$ into $Y_6$.  The information about the factorization of $\phi$ that the action induces is shown as the output of the command {\tt action\_info()}: 
\begin{python}
G1=geometric_realization(Y,A1,B1)
action_info(G1)
\end{python}
\noindent\begin{minipage}{\textwidth}
\begin{lstlisting}[language=python,basicstyle=\ttfamily\footnotesize,breakindent=.5\textwidth, breaklines=true]
The criticality of the action is 3
The weights are [0,4,9,12]
The polytopes of fixed point components have [8,1,1 8] vertices
The map GX_0 --> GX_1 is a flip
The map GX_1 --> GX_2 is a flip
The variety is complete, is Q-factorial, is not smooth, is Fano
\end{lstlisting}
\end{minipage}\par\medskip

In other words, it decomposes $\phi$ as a composition of two flips, namely:
\[
\xymatrix@C=15mm{Y_5&Y_4\ar@{-->}[r]\ar@{-->}[l]&Y_6
}
\]

If we consider the second set of divisors $(A,B)$, we obtain a $\Q$-factorial toric $5$-fold, determined by a moment polytope with $22$ vertices in $\MM((\C^*)^5)_\R$, 
together with a $\C^*$-action with the following properties:
\par\medskip
\noindent\begin{minipage}{\textwidth}
\begin{lstlisting}[language=python,basicstyle=\ttfamily\footnotesize,breakindent=.5\textwidth,breaklines=true]
The criticality of the action is 5
The weights are [0,2,5,7,8,12]
The polytopes of fixed point components have [8,2,1,1,2,8] vertices
The map GX_0 --> GX_1 is a divisorial extraction
The map GX_1 --> GX_2 is a flip
The map GX_2 --> GX_3 is a flip
The map GX_3 --> GX_4 is a divisorial contraction
The variety is complete, is Q-factorial, is not smooth, is not Fano
\end{lstlisting}
\end{minipage}\par\medskip
In other words, the realization provides the following factorization of $\phi$:
\[
\xymatrix@C=15mm{Y_5&&Y_6\\Y_2\ar[u]_d&Y\ar@{-->}[r]\ar@{-->}[l]&Y_3\ar[u]_d
}
\] 

Finally, let us  illustrate the fact that we may construct geometric realizations that are Fano varieties but not $\P^1$-bundles. That is what happens, for instance, in the case in which we set:
\[
H:=D_1+D_3-D_5,
\]
(one may check that $-K_Y-H$, $-K_Y+H$ belong, respectively, to the Mori chambers $N_2$, $N_6$) and use the function {\tt \verb|Fano_realization(Y,H)|}:

\begin{python}
F=Fano_realization(Y,H)
action_info(F)
\end{python}
\noindent\begin{minipage}{\textwidth}
\begin{lstlisting}[language=python,basicstyle=\ttfamily\footnotesize,breakindent=.5\textwidth, breaklines=true]
The criticality of the action is 4
The weights are [-12,-1,7,9,12]
The polytopes of fixed point components have [12,1,2,1,8] vertices
The map GX_0 --> GX_1 is a flip
The map GX_1 --> GX_2 is a divisorial contraction
The map GX_2 --> GX_3 is a flip
The variety is complete, is Q-factorial, is not smooth, is Fano
\end{lstlisting}
\end{minipage}\par\medskip


\section{Geometric realizations of birational maps between MDSs}\label{sec:MDSs}

In this section we will consider the problem of  effectively constructing geometric realizations of birational transformations between Mori Dream Spaces, by means of our previous construction of toric geometric realizations. We will start by recalling some background facts about Mori embeddings in toric varieties.

\subsection{Mori embeddings}\label{ssec:moriemb}

We refer the interested reader to \cite[Section~2]{HuKeel} for details. Let $Y$ be an MDS, $\rho$ be the Picard number of $Y$, and $\cO_Y(D_1),\ldots,\cO_Y(D_\rho)$ be a basis of $\Pic(Y)_\Q$. 
We define the \emph{Cox ring} of $Y$ (with respect to the choice of $D_1,\dots,D_\rho$) as the graded ring 
$$R:=R(Y;D_1,\dots,D_\rho)= \bigoplus_{(m_1,\dots,m_\rho)\in \Z^\rho} \HH^0\left(Y,\sum_{i=1}^\rho m_iD_i\right)$$
where the ring structure is induced by the multiplication of sections. The group $T:=\Hom(\Z^\rho,\C^*)$ is a $\rho$-dimensional torus, and  has a natural action on $R$ defined as follows: given $L=\sum_{i=1}^\rho m_iD_i$, $s\in \HH^0(X,L)$, and $t\in T$, we set $t\cdot s = t^Ls$. Moreover, recall (cf. \cite[Proposition 2.9]{HuKeel}) that the variety $Y$ is a GIT quotient of $\Spec(R)$ by the $T$-action. We will elaborate on this later in this section.

Since $Y$ is an MDS, $R$ is generated as a $\C$-algebra by a finite number of homogeneous elements $x_j\in \HH^0(Y,L_j)$, $L_j\in \bigoplus_{i=1}^\rho\Z D_i$,  
$j=1,\dots,N$. Hence there exists a surjective morphism
$$ \pi: \cdR := \C[x_1,\ldots,x_N]\to R,$$ 
which corresponds to an inclusion $\Spec(R)\hookrightarrow \Spec(\cdR)=\C^N$. 
By definition of $\cdR$, we may extend the action of the Picard torus on $R$ to a $T$-action on the polynomial ring $\cdR$, defined as  $t\cdot x_j = t^{L_j}x_j$. Moreover, since $\pi$ is $T$-equivariant, so is the inclusion $\Spec(R)\hookrightarrow \Spec(\cdR)=\C^N$.

Let $D\in \Pic(Y)$ be an ample line bundle on $Y$, and consider the linearization of the trivial bundle over $\C^N$ induced by $D$. By restriction, we get a linearization on the trivial bundle over $\Spec(R)$, so that $Y$ can be retrieved as the corresponding GIT-quotient of (the semistable set of) $\Spec(R)$:

\begin{lemma}\cite[Proposition~2.9]{HuKeel}\label{lem:MoriGIT}
	The GIT quotient $\Spec(R)\git^D T$ with respect to the linearization induced by $D$ is equal to $\Proj R(Y; D) \simeq Y$.
\end{lemma}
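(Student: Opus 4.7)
The statement is essentially a tautology once the grading is unpacked, so the plan is to trace through the definitions. The first step is to identify the ring of $T$-invariant sections of the $D$-linearization on the trivial bundle over $\Spec(R)$. By general GIT, we have
\[
\Spec(R)\git^D T \;=\; \Proj\bigoplus_{m\geq 0} \HH^0\bigl(\Spec(R),\mathcal{O}_{\Spec(R)}\bigr)^{T,\,mD},
\]
where the superscript $(T,mD)$ denotes the isotypic component on which $T$ acts through the character $mD\in\MM(T)$. Since $\Spec(R)$ is affine, $\HH^0(\Spec(R),\mathcal{O})=R$ as a $T$-module, so this simplifies to $\Proj\bigoplus_{m\geq 0}R^{T,mD}$.

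Next I would identify the weight spaces $R^{T,mD}$ with $\HH^0(Y,mD)$. This is where the choice of Cox grading enters: by construction the $T$-action on $R$ is precisely the grading action, so for any class $L\in\bigoplus_i \Z D_i$ the $L$-isotypic subspace is exactly $R_L=\HH^0(Y,L)$. Taking $L=mD$ (which lies in $\bigoplus_i\Z D_i$ after clearing denominators, and we may assume $D$ is integral in our basis) yields
\[
R^{T,mD}\;=\;R_{mD}\;=\;\HH^0(Y,mD).
\]
Summing over $m\geq 0$ then gives $\bigoplus_{m\geq 0}R^{T,mD}=R(Y;D)$, so $\Spec(R)\git^D T=\Proj R(Y;D)$.

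Finally, since $D$ is ample on $Y$, the standard identification $\Proj R(Y;D)\simeq Y$ completes the chain. The only point requiring a little care is the subtlety that a priori $mD$ might not lie in the sublattice of $\MM(T)$ spanned by $D_1,\dots,D_\rho$ if one chooses a $\Q$-basis; this is harmless because we can always clear denominators and work with a sufficiently divisible multiple of $D$, which does not affect the Proj. The likely main obstacle, if one wants a self-contained argument rather than citing Hu--Keel, is verifying that the GIT semistable locus is nonempty and that the quotient is genuinely projective (as opposed to merely quasi-projective); this follows because $D$ ample implies $R(Y;D)$ is a finitely generated $\C$-algebra whose Proj recovers $Y$, but making this rigorous from the Cox-ring side uses the MDS finiteness hypothesis in an essential way.
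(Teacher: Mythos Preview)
Your argument is correct: the identification $\Spec(R)\git^D T=\Proj\bigoplus_{m\geq 0}R_{mD}=\Proj R(Y;D)\simeq Y$ is exactly the right chain of equalities, and each step is justified as you describe. The paper does not actually supply a proof of this lemma; it is stated with a citation to \cite[Proposition~2.9]{HuKeel} and used as a black box. So rather than differing from the paper's approach, your sketch fills in what the paper deliberately omits, and does so along the same lines as the original Hu--Keel argument.
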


We may also use $D$ to linearize the trivial line bundle on $\C^N=\Spec(\cdR)$; the corresponding GIT quotient $\C^N\git^D T$ will be a toric variety, and we get an embedding (called {\em Mori embedding}):
$$
Y=\Spec(R)\git^D T\hooklongrightarrow \C^N\git^D T.
$$
More precisely, we may state that:

\begin{lemma}\cite[Proposition~2.11]{HuKeel}\label{lem:Moriemb}
	If $D$ is an ample divisor in $Y$ whose class is a general element in the ample cone of $Y$, then the GIT quotient $\cdY:=\C^N\git^D T$ is a quasi-smooth projective toric variety, and we have an embedding $Y\hookrightarrow \cdY$. Furthermore:
	\begin{enumerate}[leftmargin=\yy pt]
	\item\label{item:a} the restriction $\NU(\cdY)\to \NU(Y)$ is an isomorphism;
	\item\label{item:b} the isomorphism of (\ref{item:a}) sends $\ol{\Eff(\cdY)}$ bijectively to $\ol{\Eff(Y)}$;
	\item\label{item:c} every rational contraction of $Y$ extends to a toric rational contraction of $\cdY$. 
	\end{enumerate}
\end{lemma}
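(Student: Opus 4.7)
The plan is to implement the Mori embedding construction of Hu and Keel, exploiting the $T$-equivariant closed immersion $\Spec(R)\hookrightarrow \C^N$ induced by the surjection $\pi:\cdR\to R$, together with the fact that both $Y$ and $\cdY$ are obtained as GIT quotients by the same Picard torus $T$. First I would verify that $\cdY$ is a quasi-smooth projective toric variety: the $T$-action on $\C^N$ is diagonal in the characters $L_j$, so the open orbit $(\C^*)^N$ descends to a dense torus in the quotient, giving $\cdY$ a natural toric structure, while projectivity is built into the GIT setup with an ample linearization. Quasi-smoothness uses that $D$ is a general class in the ample cone of $Y$: by the variation-of-GIT theory of Dolgachev--Hu and Thaddeus, such a $D$ lies in the interior of a top-dimensional GIT chamber for the $T$-action on $\C^N$, so that the semistable locus coincides with the stable locus with at worst finite stabilizers, and the quotient has only finite quotient singularities. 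The embedding $Y\hookrightarrow \cdY$ then follows by passing to GIT quotients in $\Spec(R)\hookrightarrow \C^N$, using that the semistable locus of $\Spec(R)$ is the intersection with the semistable locus of $\C^N$ and that GIT quotients of closed $T$-invariant subschemes are closed subschemes of the ambient GIT quotient; Lemma \ref{lem:MoriGIT} identifies the resulting quotient with $Y$.

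For (a), both $\NU(\cdY)_\Q$ and $\NU(Y)_\Q$ are canonically identified with the character group of $T$ tensored with $\Q$, because the Picard group of a GIT quotient by $T$ is generated by the linearized characters; the pullback along $Y\hookrightarrow \cdY$ is then the identity on characters, hence an isomorphism. For (b), the effective cone $\ol{\Eff(\cdY)}$ is spanned by the classes of the toric boundary divisors $\{x_j=0\}$, and under the isomorphism of (a) these go to the classes of the divisors of the generators $x_j$ on $Y$, which span $\ol{\Eff(Y)}$ precisely because $x_1,\dots,x_N$ generate the Cox ring. For (c), the Mori chamber decomposition of $\ol{\Eff(Y)}$ coincides with the decomposition by GIT chambers of $T$-linearizations, so varying $D$ across walls produces toric rational contractions of $\cdY$, whose restrictions to the closed subscheme $Y\hookrightarrow \cdY$ recover the corresponding rational contractions of $Y$.

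The step I expect to be the main obstacle is ensuring that the semistable loci on $\C^N$ and on $\Spec(R)$ remain compatible as the linearization varies throughout the whole effective cone, so that the toric variation of GIT on $\cdY$ genuinely restricts to the one on $Y$ dictated by its Mori chamber structure. This amounts to showing that the irrelevant ideal of $Y$, as a $T$-GIT quotient of $\Spec(R)$, is the restriction of the irrelevant ideal of $\cdY$; this in turn relies on the fact that $x_1,\dots,x_N$ is a set of Cox ring generators, so that no \emph{new} effective divisor on $Y$ arises from outside the coordinate divisors of $\C^N$.
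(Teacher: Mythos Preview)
The paper does not supply its own proof of this lemma: it is stated with a citation to \cite[Proposition~2.11]{HuKeel} and used as a black box. Your sketch correctly outlines the Hu--Keel argument (VGIT chamber interior for quasi-smoothness, $T$-equivariant closed immersion for the embedding, identification of both $\NU$'s with the character lattice of $T$, Cox ring generators spanning the effective cone, and variation of GIT for the rational contractions), so there is nothing to compare beyond noting that you have unpacked what the paper merely quotes.
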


\begin{remark}\label{rem:Moriemb}
Note that a toric projective variety is quasi-smooth if its fan is simplicial; equivalently, it is $\Q$-factorial. 
In the above construction it is not true in general that $\Nef(\cdY)=\Nef(Y)$. In fact, the ample chamber of $Y$ in $\NU(Y)=\Mo(T)_\R$  
may contain a finite number of Mori chambers of $\cdY$. In other words, $\cdY$ depends on the particular choice of $D$, and it is only $\Q$-factorial for the general element of the ample cone.
\end{remark}

\subsection{Restricting toric geometric realizations}\label{ssec:restriction}

Now we will make use of Mori embeddings to construct effectively geometric realizations of birational transformations between Mori Dream Spaces, by embedding them appropriately into projective toric varieties. 

We will work again under the assumptions of Setup \ref{set:birational resolution}, considering a birational map $\phi:Y_-\dashrightarrow Y_+$ defined by two $\Q$-factorial birational contractions of a Mori Dream Space $Y$,
\[
\xymatrix{Y_-\ar@<1ex>@/^0.9pc/@{-->}[rrrr]^{\phi}&&Y\ar@{-->}[ll]_{\pi_-}\ar@{-->}[rr]^{\pi_+}&&Y_+}
\]
supported by big divisors $A,B\in \Pic(Y)$, respectively. 

As in Construction \ref{cons:geomreal} we choose $\ell$ so that $H:=(B-A)/\ell$ is Cartier, and consider a sufficiently large integer $m$, such that the following conditions hold:
\begin{enumerate}[leftmargin=\yy pt]
\item[(a)] $A$ and $B=A+\ell H$ are inner points of two Mori chambers of $\ol{\Eff(Y)}$;
\item[(b)] there exists $a\in[0,\ell m]\cap \Z$ such that the divisors $D:=mA+aH$ and $D+H$ are ample on $Y$.
\end{enumerate}
 As above, we will consider the Cox ring $R$ of $Y$ with respect to the choice of a basis $\{D_1,\dots,D_\rho\}$ of $\Pic(Y)_\Q$,   the corresponding torus $T=\Hom(\Z^\rho,\C^*)$, and a graded presentation $\pi:\cdR=\C[x_1,\dots,x_N]\to R$, so that we have a Mori embedding:
\[Y=\Spec(R)\git^DT\hooklongrightarrow \cdY:=\C^N\git^DT,\]
for some ample divisor $D\in \Pic(Y)$. Identifying $\NU(Y)$ with $\NU(\cdY)$ as in Lemma \ref{lem:Moriemb}(\ref{item:a}), we may assume, without loss of generality, that $D$ is an ample Cartier divisor in $\cdY$. We will further assume that:
\begin{enumerate}[leftmargin=\yy pt]
\item[(c)] the classes of $D,D+H$ belong to the same Mori chamber of $\cdY$, i.e., $\cdY$ is $\Q$-factorial and the toric variety $\C^N\git^{D+H}T$ is isomorphic to $\cdY$.
\end{enumerate}
Note that, up to substituting $Y$ with a birational contraction dominating $Y_\pm$, we may always find $A,B,\ell, m$ satisfying the conditions (a),(b),(c).

By Lemma \ref{lem:Moriemb} the $\Q$-factorial birational contractions $\pi_\pm$ of $Y$ extend to toric $\Q$-factorial birational contractions of the toric variety $\cdY$, that we denote by $\pi_\pm:\cdY\dashrightarrow\cdY_\pm$. In particular the map $\phi=\pi_+\circ\pi_-^{-1}$ extends to a birational map $\cdY_-\dashrightarrow \cdY_+$ that, abusing notation, we denote again by $\phi$:
\[
\xymatrix{\cdY_-
\ar@<1ex>@/^0.9pc/@{-->}[rrrr]^{\phi=\pi_+\circ\pi_-^{-1}}&&\cdY\ar@{-->}[ll]_{\pi_-}\ar@{-->}[rr]^{\pi_+}&&\cdY_+}
\]
Note that the maps $\pi_-,\pi_+$ are supported by big $\Q$-divisors whose restrictions to $Y$ are  respectively $A,B$. Abusing notation, we denote them by $A,B\in\Pic(\cdY)_\Q$ and, up to exchanging them with a multiple, we may assume that $A,B$, and $H=(B-A)/\ell$ are Cartier divisors in $\cdY$.  
We may now construct a toric geometric realization $\cdX$ of $\phi:\cdY_-\dashrightarrow \cdY_+$, using  the combinatorial construction described in Section \ref{section:toric}. As in Remark \ref{rem:geomrealunprun}, the assumptions (b), (c) imply that $\cdX$ is an unpruning of $\cdW=\P(\cO_\cdY\oplus\cO_\cdY(H))$, and we can write:
\[\begin{split}
\cdY=& \Proj \bigoplus_{k\geq 0} \HH^0(\cdY,\cO_{\cdY}(mkA+kaH)),\\
\cdW=&\Proj \bigoplus_{k\geq 0}\HH^0(\cdY,S^k(\cO_{\cdY}(mA+aH)\oplus \cO_{\cdY}(mA+(a+1)H)))=\\
   =& \Proj \bigoplus_{k\geq 0}\bigoplus_{b=ka}^{k(a+1)}\HH^0(\cdY,\cO_{\cdY}(mkA+bH)),\\
\cdX=&\Proj \bigoplus_{k\geq 0}\bigoplus_{b=0}^{mk}\HH^0(\cdY,\cO_{\cdY}(mkA+bH)).
\end{split}
\]
We denote the graded rings above as 
$S(\cdY)$,  $S(\cdW)$, $S(\cdX)$, respectively. The maps $\cdY\longleftarrow \cdW\dashleftarrow \cdX$ 
are then induced by the natural inclusions of graded rings:
\[
S(\cdY)\hooklongrightarrow S(\cdW)\hooklongrightarrow S(\cdX).
\]

Our construction is compatible with the restrictions to $Y$, so the geometric realization $X$ of the birational map $\phi:Y_-\dashrightarrow Y_+$, constructed upon divisors $A$ and $B$, will be a subvariety $X\subset\cdX$; more precisely, it will be the strict transform of $W:=\P(\cO_Y\oplus\cO_Y(H))\subset \cdW$ into $\cdX$, so we will have  the following diagram  of Cartesian squares:
\[
\xymatrix@C=11.5mm{\cdY&\cdW\ar[l]\ar@{-->}[r]&\cdX\\
Y\ar@{^(->}+<0pt,10pt>;[u]&W\ar[l]\ar@{^(->}+<0pt,10pt>;[u]\ar@{-->}[r]&X\ar@{^(->}+<0pt,10pt>;[u]}
\]
In particular, if $I(Y)\subset S(\cdY)$ is the ideal of $Y$ in $\cdY$, it follows that the ideal of $X$ in $\cdX$ will be the extension:
\[
I(Y)\otimes S(\cdX).
\]
In other words, $X$ is given in $\cdX$ by the same homogeneous polynomial equations of $Y$ in $\cdY$.
This fact takes a particularly simple form in the case in which $Y$ is a hypersurface in $\cdY$, so that it can be seen as the set of zeroes of a section of a  line bundle. In the next section we will present an example of this kind.

\subsection{An example}\label{ssec:example}

The following example aims to illustrate the results of the previous section; namely we will construct a geometric realization of a birational transformation between MDSs by means of the Mori embeddings into toric varieties. For the sake of clarity, we will use the same notation as in the previous section.\par\medskip

\noindent\textbf{The toric varieties $\cdY, \cdY_{\pm}$.} Let $\cdY_-=\cdY$ be the (Grothendieck) projectivization of the bundle $\cO_{\P^2}(0,1,1):=\cO_{\P^2}\oplus \cO_{\P^2}(1)\oplus \cO_{\P^2}(1)$.  It is a smooth toric Fano $4$-fold, corresponding to number $9$ of Batyrev's list (cf. \cite[Proposition 3.1.1]{batyrev}). We may construct $\cdY_-$ as follows:

\begin{python}
P2 = toric_varieties.P2()
M = matrix([[0,0,0],[0,0,1],[0,0,1]])
Y = projective_bundle(P2,M)
\end{python}

The rays of the fan defining $\cdY_-$ are generated by the vectors:
\[\begin{array}{l}\rho_1=(-1,-1,1,1),\quad \rho_2=(0,0,-1,-1),\quad \rho_3=(0,0,0,1),\\\rho_4=(0,0,1,0),\quad \rho_5=(0,1,0,0),\quad \rho_6=(1,0,0,0),\end{array}\]
corresponding to irreducible $(\C^*)^4$-invariant divisors $D_{1},\dots, D_{6}$, respectively. The Picard group of $\cdY_-$ is generated by the linear equivalence classes of $D_1,D_2$, and we have the relations $[D_1]=[D_5]=[D_6]$, $[D_3]=[D_4]=[D_2-D_1]$. The nef cone of $\cdY_-$ is generated by the numerical classes $[D_1],[D_2] \in \NU(\cdY_-)$; the movable cone is equal to the effective cone and is generated by the classes $[D_1],[D_3]$. 

The section corresponding to the projection $\cO_{\P^2}(0,1,1)\to \cO_{\P^2}$ can be flipped, obtaining an SQM $\phi: \cdY_-\dashrightarrow \cdY_+$, with $\cdY_+=\P(\cO_{\P^1}(0,1,1,1))$.  The nef cone of $\cdY_+$ is generated by the numerical classes of $[D_2],[D_3]$, and the natural projection of $\cdY_+$ to $\P^1$ is given by the extremal ray generated by $[D_3]$.\par\medskip

\noindent\textbf{The toric geometric realization $\cdX$.} Let us construct a toric geometric realization $\cdX$ of the SQM $\phi: \cdY_- \dashrightarrow \cdY_+$. To this end, let $A = 2D_1+2D_2\in \Amp(\cdY_-)$, $B=-D_1+2D_2\in \Amp(\cdY_+)$ and $\ell=3$.

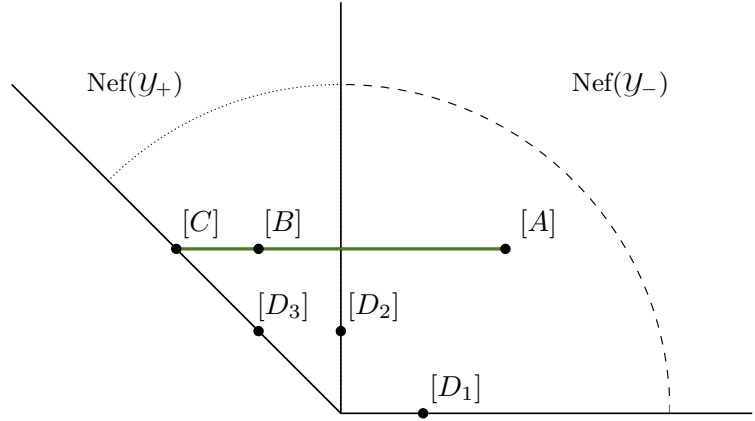
\begin{figure}[!ht]
	\centering
	\resizebox{0.8\textwidth}{!}{
		\begin{tikzpicture}
			\draw [ line width=0.2mm](0,0) to (5,0);
			\draw [ line width=0.2mm](0,0) to (0,5);
			\draw [ line width=0.2mm](0,0) to (-4,4);
			\node at (1,0) [circ] {}; 
				\node at (1.4,0.3) {$[D_1]$};
			\node at (0,1) [circ] {}; 
				\node at (0.4,1.3) {$[D_2]$};
			\node at (-1,1) [circ] {}; 
				\node at (-0.7,1.3) {$[D_3]$};
			\draw [color={rgb,255:red,79; green,122; blue,40}, line width=0.4mm](-2,2) to[short] (2,2);   
			\node at (-2,2) [circ] {}; 
				\node at (-1.7,2.3) {$[C]$};
			\node at (-1,2) [circ] {}; 
				\node at (-0.7,2.3) {$[B]$};
			\node at (2,2) [circ] {}; 
				\node at (2.4,2.3) {$[A]$};
			
			\node at (-2.5,4) {\small$\text{Nef}(\cdY_+)$};
			\node at (3.4,4) {\small$\text{Nef}(\cdY_-)$};
			
			\begin{scope}
				\clip (0,0) rectangle (4,4);
				\draw[dashed] (0,0) circle(4);
			\end{scope}

			\draw[densely dotted] (0,0) -- (0,4) arc [start angle=90, delta angle=45, radius=4cm] -- (0,0);
		\end{tikzpicture}
	}
	\caption{The effective cone of $\cdY_-$ and its Mori chamber decomposition.\label{fig:Conestoric1}}
\end{figure}

\begin{python}
A = [2,2,0,0,0,0]
B = [-1,2,0,0,0,0]
X = geometric_realization(Y,A,B,3)
\end{python}
\noindent\begin{minipage}{\textwidth}
\begin{lstlisting}[language=python,basicstyle=\ttfamily\footnotesize,breakindent=.5\textwidth,breaklines=true]
The criticality of the action is 2
The weights are [0, 2, 3]
The polytopes of fixed point components have [9, 1, 8] vertices
The map GX_0 --> GX_1 is a flip
The variety is complete, is Q-factorial, is smooth, is not Fano
\end{lstlisting}
\end{minipage}\par\medskip

Now we choose instead of $B$ the divisor $C = -2D_1+2D_2 = 2D_3$, and $\ell=4$. Note that $C$ belongs to an extremal ray of $\Eff(\cdY_\pm)$, so we are not in the situation of Set-up \ref{set:birational resolution}. Nevertheless, the geometric realization construction of Section \ref{section:toric} still works in this case, and it provides a geometric realization $\cdX_2$ whose source is $\P^1$, and such that the pruning map $\beta: \cdX\to \cdX_2$ is the blowup of $\cdX_2$ along $\P^1$. 
\par\medskip

\begin{python}
C = [-2,2,0,0,0,0]
X2 = geometric_realization(Y,A,C,4)
\end{python}
\noindent\begin{minipage}{\textwidth}
	\begin{lstlisting}[language=python,basicstyle=\ttfamily\footnotesize,breakindent=.5\textwidth,breaklines=true]
The criticality of the action is 2
The weights are [0, 2, 4]
The polytopes of fixed point components have [9, 1, 2] vertices
The map GX_0 --> GX_1 is a flip
The variety is complete, is Q-factorial, is smooth, is Fano
\end{lstlisting}
\end{minipage}

Moreover, by analyzing its combinatorial data, one may show that $\cdX_2 \simeq \P(\cO_{\P^3}(0,1,1))$.  The induced $\C^*$-action in $\P(\cO_{\P^3}(0,1,1))$ descends via the natural projection $\pi:\P(\cO_{\P^3}(0,1,1))\to \P^3$ to an equalized action with sink a plane $\P^2$ and source a point $P\in\P^3$. Then the sink of $\P(\cO_{\P^3}(0,1,1))$ is $\pi^{-1}(\P^2)$ and its source is a line in the fiber $\pi^{-1}(P)$, not meeting the minimal section of $\pi$. It has a unique inner fixed point, contained in the fiber $\pi^{-1}(P)$. 
The Picard group of $\cdX$ is then generated by $\cO_{\cdX}(\cdY_+)$,  $\beta^*\cO_{\cdX_2}(1)$ and $\beta^*\pi^*\cO_{\P^3}(1)$.\par\medskip

\noindent\textbf{The anticanonical sections.} Let us now consider a general anticanonical section $Y_-$ of the toric Fano $4$-fold $\cdY_-= \P(\cO_{\P^2}(0,1,1))$. Note that the anticanonical line bundle of $\cdY_-$ is $\cO_{\cdY_-}(3)\otimes p^*\cO_{\P^2}(1)$; then $Y_-$ meets the minimal section of $\P(\cO_{\P^2}(0,1,1))$ along a line, whose normal bundle in $Y_-$ is $\cO_{\P^1}(-1)^{\oplus 2}$. The flip $\phi: \cdY_-\dashrightarrow \cdY_+$ restricts to the flip of $\phi: Y_-\dashrightarrow Y_+$ along that line. At the level of geometric realizations, we consider $Y_-$ as a divisor in the sink $\cdY_-$ of $\cdX$, and extend this divisor to a divisor in $\cdX$ by means of the $\C^*$-action, in order to obtain a geometric realization $X$ of the map $\phi:Y_-\dashrightarrow Y_+$. The variety $X$ is a $\C^*$-invariant divisor in $\cdX$, with associated line bundle $\cO_{\cdX}(-4\cdY_+)\otimes b^*\cO_{\cdX_2}(3)\otimes b^*p^*\cO_{\P^3}(1)=\omega_{\cdX}^{-1}\otimes (\cO_{\cdX}(\cdY_+)\otimes b^*p^*\cO_{\P^3}(-1))$.


\bibliographystyle{plain}
\bibliography{biblioPostdoc}
\end{document}